\documentclass{article}
\usepackage{amsmath}
\usepackage[colorlinks,
            plainpages=false,
            linkcolor=red,
            anchorcolor=blue,
            citecolor=green
            ]{hyperref}
\usepackage{amsthm}
\usepackage{amsfonts}
\usepackage{amssymb}
\usepackage{bbm}
\usepackage{booktabs}
\usepackage{graphicx}
\usepackage{geometry}
\geometry{left=3.0cm,right=3.0cm}
\usepackage{xcolor}
\usepackage{indentfirst}
\usepackage[misc]{ifsym}
 \usepackage{cite}
 \usepackage{bm}
\newtheorem{theorem}{Theorem}[section]
\newtheorem{lemma}{Lemma}[section]
\newtheorem{proposition}{Proposition}[section]

\newtheorem{remark}{Remark}[section]
\newtheorem{assumption}{Assumption}[section]
\newtheorem{example}{Example}

\newcommand{\Ass}[1]{\textbf{\upshape A#1}}

\def\d{{\,\rm{d}}}
\bibliographystyle{plain}
\begin{document}


\title{ Sharp error estimate of  variable time-step IMEX BDF2 scheme for parabolic integro-differential equations with  initial singularity  arising in finance}

\author{Chengchao Zhao
	\thanks{Beijing Computational Science Research Center, Beijing, 100193, P.R. China (cheng chaozhao@csrc.ac.cn).}
	\and Ruoyu Yang
	\thanks{School of Mathematics and Statistics, Wuhan University, Wuhan 430072, China (ruoyuyang@whu.edu.cn)}
\and Yana Di \thanks {Research Center for Mathematics, Beijing Normal University, Zhuhai 519087, China
Division of Science and Technology, BNU-HKBU United International College, Zhuhai 519087, China (yndi@uic.edu.hk)}
	\and Jiwei Zhang
	\thanks{School of Mathematics and Statistics, and Hubei Key Laboratory of Computational Science, Wuhan University, Wuhan 430072, China (jiweizhang@whu.edu.cn). He is supported partially by NSFC under grant 11771035.}
	}

\maketitle
\begin{abstract}

%

 The recently developed technique of DOC kernels has been a great success in the stability and convergence analysis for BDF2 scheme with variable time steps. However, such an analysis technique seems not directly applicable to  problems with initial singularity. In the numerical simulations of solutions with initial singularity, variable time-steps schemes like the graded mesh are always adopted  to achieve the optimal convergence, whose first adjacent time-step ratio may become pretty large so that the acquired restriction is not satisfied. In this paper, we revisit the variable time-step implicit-explicit two-step backward differentiation formula (IMEX BDF2) scheme presented in [W. Wang, Y. Chen and H. Fang, \emph{SIAM J. Numer. Anal.}, 57 (2019), pp. 1289-1317] to compute the  partial integro-differential equations (PIDEs) with  initial singularity. We obtain the sharp error estimate under a mild restriction condition of adjacent time-step ratios $r_{k}: =\tau_{k}/\tau_{k-1}  \; (k\geq 3)  < r_{\max} = 4.8645 $ and a much mild requirement on the first ratio, i.e.,  $r_2>0$. This leads to the validation of our analysis  of the variable time-step IMEX BDF2 scheme when the initial singularity is dealt by a simple strategy, i.e.,  the graded mesh $t_k=T(k/N)^{\gamma}$.  In this situation, the  convergence of order $\mathcal{O}(N^{-\min\{2,\gamma \alpha\}})$ is achieved with  $N$ and $\alpha$ respectively representing the total mesh points and indicating the regularity of the exact solution. This is, the optical convergence will be achieved by taking $\gamma_{\text{opt}}=2/\alpha$. Numerical examples are provided to demonstrate our theoretical analysis.

    \par\textbf{keywords:} implicit-explicit  method ;  two-step backward differentiation formula; the discrete orthogonal convolution kernels; the discrete complementary convolution kernels; error estimates; variable time-step;
\end{abstract}

\section{Introduction}

In this paper, we consider the computation of   parabolic integro-differential equations (PIDEs) which arise in option pricing theory when the underlying asset follows a jump diffusion process \cite{AA00,M76,W98,KTK15,2019On}
    \begin{equation}\label{continuous}
    \begin{aligned}
	    \partial_{t}u(x,t) - c_1   u_{xx}(x,t) + c_2 u_x(x,t) + c_3 u + \mathcal{J}(u(x,t)) &= f(x,t), &&(x,t) \in \Omega\times (0,T], \\
u(x,t) &= u_b(x,t), &&(x,t) \in \partial\Omega\times (0,T],\\
u(x,0) &=u_0(x), &&x\in \Omega,
    \end{aligned}
    \end{equation}
    where $\Omega = (x_l,x_r)$ with its boundary $\partial \Omega$, the parameters $c_1>0$, $c_2,c_3\in \mathbb{R}$. Here $\mathcal{J}(\cdot)$  represents the nonlocal integral operator and is defined by
    $$\mathcal{J}(u) :=\int_\Omega u(z,t) \rho(x-z)\d z,$$ where $\rho: \mathbb{R}\to \mathbb{R}^+$ is a given function satisfying $\|\rho(x)\|_{L^\infty}\leq C$ for some positive constant $C$. In this situation, there exists a constant $\hat{C}_\rho$ (only depends on the given function $\rho$ and  $\Omega$) such that
     \begin{equation}\label{EQ_CJc}
       \| \mathcal{J}(u) \|_{L^2(\Omega)} \leq \hat{C}_\rho \| u \|_{L^2(\Omega)}.
     \end{equation}
The bound \eqref{EQ_CJc} is satisfied in many practical problems such as  the  finite activity jump diffusion model (e.g., Merton model and Kou model),  CGMY and KoBoL  with infinite
activity and finite variation \cite{C02,C04}.

There are two main considerations in numerically solving PIDEs \eqref{continuous}. The first one   is the discretization of nonlocal integral operators. An undue discretization of the nonlocal integral operator with finite difference schemes,  such as the fully implicit time stepping scheme,  may increase the computational complexity since  inversions will need  solve the resulting systems with full matrices at each time step. To deal with the full matrix, many methods are designed  such as iterative methods \cite{MPS04,AO05,TR00,HFV05}, FFT  \cite{AA00,HFV05} and the  alternating direction implicit (ADI)  method \cite{AA00}. An alternative approach is to avoid  the inversion of a full matrix. To do so, an increasingly popular alternative is the implicit-explicit (IMEX) method \cite{KL11,KL11b,ST14,2019On,KTK15}, which typically treats the nonlocal integral term  explicitly and the rest part implicitly. The IMEX method leads to a tridiagonal system and can be solved  efficiently.

A feature of PIDEs \eqref{continuous}  is the weak singularity near the initial time $t = 0$ arose from the nonsmooth  initial data. For example, as noted in \cite{2019On}, with a nonsmooth  payoff function in option models, the regularity of   exact solution $u(x,t)$ may have the form   $\|  \partial_t^k u \|_{L^2}\leq C t^{\frac12-k}, k=1,2,3$. To be more general, in this paper, we  consider the regularity assumption as follows:
\begin{assumption}\label{assumpation}
There exists a constant $\bar{C}$ such that the solution to \eqref{continuous} satisfies
       \begin{equation}\label{time_regularity}
     	 \|  \partial_t^k u  \| \leq \bar{C} t^{ -k + \alpha },\ t \in \left( 0,T \right],\ k = 1,2,3,
       \end{equation}
where $\partial_t^k u := \frac{ \partial^k u }{ \partial t^k } $ and the regularity parameter $\alpha$ satisfies $\frac12\leq \alpha\leq 1$.
\end{assumption}

For solving such  initial singularity problems, another consideration is how to develop efficient and accurate algorithms.  A heuristic method to improve efficiency without sacrificing accuracy is the adaptive time-step scheme. Thus, one can employ small time steps  when the dynamics evolves rapidly, and use large time steps when the dynamics evolves slowly \cite{2021beam,2020crystal,2019On}. Another method is to employ high-order schemes in time to have the same accuracy with a relatively large time-step, for example, the  Runge--Kutta methods \cite{BNR07}, Crank--Nicolson method \cite{HFV05,STS14} and two-step backward
differentiation formula (BDF2) \cite{AO05,KTK15,2019On}.  Specifically, the BDF2 method  has received much attention due to its strong stability  (A-stable) \cite{unbounded,2021beam,2019On,LTZ20,LSTZ21,2020crystal,2020reaction,SFF18}.

The numerical analysis of BDF2 scheme with variable time steps receives much attention. For instance,  Becker \cite{B98} (also Thom\'ee's classical book \cite[Lemma 10.6]{Galerkin})  presents the stability and convergence with a factor $\exp(C\Gamma_n)$ under the adjacent time-step ratio satisfying $0< r_k\leq (2+\sqrt{13})/3\approx 1.868$, where $\Gamma_n := \sum_{k=2}^{n-2} \max\{0,r_k-r_{k+2}\}$.
Emmrich \cite{E05} improves the Becker's condition up  to $ 1.91$.  By using a novel generalized discrete Gr\"onwall-type inequality, Chen et al. \cite{unbounded} consider the Cahn-Hilliard equation, and present the energy stability  without the factor $\Gamma_n$ with $0< r_k\leq 3.561$.  The works \cite{2019diffusion,2020reaction} consider linear parabolic equations based on DOC kernels with $0\leq r_k \leq 3.561$ \cite{2019diffusion} and $0<r_k\leq 4.8645$ \cite{2020reaction}, respectively. There are also a great progress on the error estimates for nonlinear equations, see  \cite{2021beam,LSTZ21,2020crystal,LTZ20}.

It is worthy to point out that the analysis in literature mentioned above is based on smooth solutions, and will fail for the initial singularity \eqref{time_regularity}. In fact, an established technique to restore an optimal convergence rate is to employ a graded mesh
\begin{equation} \label{gm}
t_k=T(k/N)^\gamma \quad \text{for} \; \; 0\leq k\leq N,
\end{equation}
where the parameter $\gamma$ is used to adapt to the strength of the singularity.  Choosing $\gamma = 1$ will lead to a uniform mesh, and the larger the value of $\gamma$ the more  strongly the temporal meshes are concentrated at the initial time.  For instance, such meshes have long been applied to numerically solve Fredholm \cite{GG82} and Volterra \cite{HB85} integral equations, and time-fractional PDEs \cite{SOG17,LLZ18,LMZ20,LMZ21}.

Given the typical regularity \eqref{time_regularity} with $\alpha = 1/2$, Wang et al. \cite{2019On} consider the sharp error estimate of a variable-time-step IMEX-BDF2 scheme  for the problem \eqref{continuous}.  A half-order convergence is presented under $0 < r_k  \leq 1.91$, and the second-order convergence is achieved by introducing two kinds of nonuniform grids. For the graded mesh \eqref{gm},  the the maximum error for the fully BDF2 scheme  will be order of
\begin{equation} \label{OC}
N^{-\min\{2,\gamma\alpha\}},
\end{equation}
where we ignore the additional error due to the spatial discretization.
From \eqref{OC}, if $\gamma \geq 2/\alpha$ then the error is $N^{-2}.$ Specifically,  for the case of $\alpha = 1/2$,  one needs to take $\gamma \geq 4$ to have the optimal convergence of $N^{-2}.$  In this situation,  the first-level adjacent time-step ratio defined in \eqref{mr} satisfies  $r_2:=\tau_2/\tau_1=2^{\gamma}-1\geq 15$, which significantly breaks the adjacent time-step ratio restriction given in \cite{2019On,2019diffusion,2020reaction} at the first-step ratio. This is to say, the analysis in \cite{2019On,2019diffusion,2020reaction} will be not valid any longer when the initial regularity is considered. It is numerically indicated in \cite{N19} that the error at the first step shall become larger as the first ratio $r_2$ increases. Thus, it is desired to develop  a more general analytic framework and find out the effect of the first ratio  on the error bound.

The aim of this paper is to revisit the IMEX-BDF2 scheme with variable time steps given in \cite{2019On}  for solving the PIDE \eqref{continuous}, and achieve the sharp error estimate  under the following adjacent time-step ratios condition
\begin{center}
\Ass{1} : \quad $r_2>0$ and $0< r_k \leq r_{\max}-\delta,\quad 3\leq k \leq N$ for any small constant $0<\delta < r_{\max}$.
\end{center}

In \Ass{1}, we point out that  $r_{\max} = \frac{1}{6} \left(\sqrt[3]{1196\!-\!12\sqrt{177}}+\sqrt[3]{1196\!+\!12\sqrt{177}}\right)+\frac43 \approx 4.8645 $ is the root of $x^3 = (2x+1)^2$, and the parameter $\delta$ can be taken any small value in the practical adaptive time-step strategies.
Our contributions can be listed as follows.
\begin{itemize}
\item An alternative representation of $\Ass{1}$ is given in \cite{LJWZ21}  by $0<r_k\leq r_{\text{user}}<r_{\max}$, where $r_{\text{user}}$ can be considered as $r_{\text{user}} = r_{\max}-\delta$ in this paper. We introduce $\delta$ aimed to clearly indicate how do the stability and convergence estimates  depend  on the gap $\delta = r_{\max}-r_{\text{user}}$.
\item Comparing the recent work \cite{2019On}, we extend to  the adjacent time-step ratios condition from $0 < r_k  \leq 1.91$ to $0 < r_k  \leq 4.86$. More importantly, our ratio condition in $\Ass{1}$ has no restriction on the first ratio $r_2$.  It implies that our analysis will remain valid to deal with the initial singularity by using the graded mesh for any $\gamma \leq 2/\alpha$. This is,  an optimal convergence of order $\mathcal{O}(N^{-2})$ is achieved under the graded mesh  $t_k=T(k/N)^{\gamma_{\text{opt}}}$ with an optimal parameter $\gamma_{\text{opt}}=2/\alpha$.
\item Besides, our error estimates clarify that the error increases polynomially with respect to the first ratio $r_2$, which is consistent with the numerical experiments in \cite{N19}.
\end{itemize}

Our stability analysis and sharp error estimate for the IMEX-BDF2 scheme  are based on the  recently developed DOC \cite{2019diffusion} and DCC  \cite{2020reaction} kernels by developing their new properties,  which is used to overcome the difficulties arising from the initial singularity, including a strictly positive definiteness of the BDF2 kernels $b^{(n)}_{n-k}$  (see Lemma \ref{thpositive}) and a sharper estimate of  DCC kernels $p^{(n)}_{n-k}$   (see Proposition \ref{pttau}). In addition, the finite difference method is used for spatial discretization. Differing from the finite element method and spectral method, the finite difference method may break the discrete integration by parts formula, which
 may bring extra difficulties in the analysis.  A novel convolution-type Young's inequality (see Lemma \ref{Lemma_Dn}) is developed to handle the spatial difficulties.

The remainder is organized as follows. Section \ref{Section2} presents IMEX BDF2 scheme, and several properties of DOC and DCC kernels including some new properties such as the strictly positive definiteness of the BDF2 kernels $b^{(n)}_{n-k}$  and a sharper estimate of  the  kernels. The stability and half-order convergence under the ratio restriction \Ass{1} are presented in Section \ref{Section3} and the optimal second-order convergence are also achieved under a graded mesh with  an optimal parameter in Section \ref{Section4}. Numerical experiments are provided to demonstrate our theoretical analysis in Section \ref{Section5}.

   \section{IMEX BDF2 scheme, DOC and DOC kernels \label{Section2}}

   In this paper, we consider an IMEX-BDF2 scheme with variable time steps combining with the finite difference method in space. We first take the generally variable time grids by $0=t_{0}<t_{1}<\cdots<t_{N}=T$, and denote the $k$th time-step size by $\tau_{k}:=t_{k}-t_{k-1}$, the maximum time step size by $\tau:=\max_{1\leq k\leq N}\tau_{k}$, and the adjacent time-step ratio by
     \begin{equation} \label{mr}
       r_{k} = \frac {\tau_{k}}{\tau_{k-1}},\quad 2 \leq k \leq N,\qquad r_{1} = 0.
     \end{equation}
Denote $u^{n}(x)$ by the approximation of  $u(x,t_{n})$, and $\nabla_{\tau}u^{n} = u^{n} - u^{n-1}$ by  the difference operator. The BDF1 and BDF2 with variable time steps are respectively defined by
     \begin{equation}\notag
       \mathcal{D}_{1}u^{n} = \frac {1}{\tau_{n}}\nabla_{\tau}u^{n},\qquad \mathcal{D}_{2}u^{n} = \frac {1 + 2r_{n}}{\tau_{n}(1 + r_{n})}\nabla_{\tau}u^{n} - \frac {r_{n}^{2}}{\tau_{n}(1 + r_{n})}\nabla_{\tau}u^{n-1},\quad \textrm{for}\quad 2 \leq n \leq N.
     \end{equation}
Since the BDF2 needs two starting values,  the BDF1 is used to compute the first-level solution $u^1$ and  the BDF2 is used when $n\geq 2$.  If we introduce the notations, i.e.,  $b_{0}^{(1)} := 1/\tau_{1}$ and
     \begin{equation}
       b_{0}^{(n)} = \frac {1 + 2r_{n}}{\tau_{n}(1 + r_{n})}, \quad b_{1}^{(n)} = - \frac {r_{n}^{2}}{\tau_{n}(1 + r_{n})}\quad \textrm{and}\quad b_{j}^{(n)} = 0, \quad\textrm{for}\ 2\leq j \leq n - 1,\label{B}
     \end{equation}
BDF2 (using BDF1 to compute  $u^1$) can be written in the following discrete convolution form
 \begin{equation}
       \mathcal{D}_{2}u^n := \sum_{k=1}^{n} b_{n-k}^{(n)}\nabla_{\tau}u^{k},\quad n \geq 1.
     \end{equation}

The finite difference method is considered for the spatial discretization. We denote the spatial length $h = (x_r-x_l)/M$ for a positive integer $M$, the discrete grid $\bar{\Omega}_h =\{ x_l + ih|0 \leq i \leq M\},  \Omega_h = \bar{\Omega}_h\cap \Omega$,
and $\partial \Omega_h  = \bar{\Omega}_h\cup \partial \Omega$. For any grid function $v_h = \{v_i | v_i = v(x_i), x_i\in \bar{\Omega}_h\}$, let $\Delta_h v_i$ be the standard second-order approximation of $v_{xx}(x_i)$ and $\nabla_h v_i:=\frac{v_{i+1}-v_{i-1}}{2h}$ be the approximation of $v_{x}(x_i)$. Denote the space of
grid functions by $\mathcal{V}_h = \{v_h| v_h \text{\ vanishes on\ } \partial \Omega_h\}$.
The discrete $L^2$ inner product and associated discrete $L^2$-norm are defined respectively by
$\langle u,v\rangle  := \sum_{i=1}^{M-1} h u_iv_i,\;  \|u\| := \sqrt{\langle u,u\rangle}, \;  \forall u,v\in \mathcal{V}_h.
$ The discrete $H^1$ semi-norm and $H^1$-norm are defined respectively by $|u|_1 =  \sqrt{\langle-\Delta_h u,u\rangle},\; \|u\|_1 = \sqrt{\|u\|^2+|u|_1^2},\; \forall u\in \mathcal{V}_h.
$

 The integral operator in  \eqref{continuous} is approximated by the composite trapezoidal rule
\begin{equation*}\begin{split}
	\mathcal{J}(u_i^n) = \int_\Omega u(z,t_n)\rho(z-x_i) \d z \approx \frac{h}{2}\left( u_0^n \rho_{i,0}^n + 2\sum_{j=1}^{M-1} u_j^n \rho_{i,j}^n + u_M^n \rho_{i,M}^n \right):=\mathcal{J}_h(u_i^n),
\end{split}\end{equation*}
where $\rho_{i,j}^n = \rho( x_j-x_i , t_n)$.  Similar to \eqref{EQ_CJc}, there exists constant  $C_\rho$ such that
\begin{align}
\|\mathcal{J}_h (u^n_h)\|\leq C_J \|u^n_h\|.\label{EQ_CJ}
\end{align}
Thus, a fully discrete IMEX BDF2 scheme for solving the  PIDE \eqref{continuous} is given by
     \begin{equation}\label{IMEXsp}
     \begin{aligned}
       \mathcal{D}_{2}u^{n}_i - c_1 \Delta_h u^{n}_i + c_2 \nabla_h u^{n}_i + c_3 u^{n}_i + \mathcal{J}_h(Eu^{n-1}_i) &= f^{n}_i,&& x_i\in \Omega_h, 1 \leq n \leq N,\\
       u^n_i&=u_b(x_i,t_n), &&x_i\in \partial\Omega_h, 1 \leq n \leq N,\\
       u^0_i&=u_0(x_i), && x_i\in \Omega_h,
     \end{aligned}
     \end{equation}
     where $f^n:=f(t_n)$, $Eu^{n-1} = (1+r_{n})u^{n-1}-r_{n}u^{n-2}$ for $n\geq 2$ and $Eu^0 = u^0$.

   \subsection{The definitions and properties of DCC and DOC kernels}
The techniques of the DCC and DOC kernels will play a key role in the following stability and convergence analysis. Here we first introduce the DCC and DOC kernels \cite{2020reaction,2019diffusion,2021beam}, and also develop a new positive definiteness of the DOC kernels and a sharp estimate of the DCC kernels.

     The discrete complementary convolution (DCC) kernels $p_{n-j}^{(n)}$ \cite{2020reaction} is defined by
          \begin{equation}
       \sum_{j=k}^{n}p_{n-j}^{(n)}b_{j-k}^{(j)} \equiv 1,\quad \forall 1 \leq k \leq n,\,1 \leq n \leq N,\label{PB}
     \end{equation}
which has the property of
     \begin{equation}
       \sum_{j=1}^{n}p_{n-j}^{(n)}\mathcal{D}_{2}u^{j} = \sum_{j=1}^{n}p_{n-j}^{(n)}\sum_{l=1}^{j}b_{j-l}^{(j)}\nabla_{\tau}u^{l}
       = \sum_{l=1}^{n}\nabla_{\tau}u^{l}\sum_{j=l}^{n}p_{n-j}^{(n)}b_{j-l}^{(j)} = u^{n}-u^{0},\quad \forall n\geq 1.\label{PD}
     \end{equation}
Denote by $\delta_{nk}$ the Kronecker delta symbol with $\delta_{nk} = 1$ if $n = k$ and $\delta_{nk} = 0$ if $n \neq k$. The discrete orthogonal convolution (DOC) kernels \cite{2019diffusion}  is defined by
     \begin{equation}
       \sum_{j=k}^{n}\theta_{n-j}^{(n)}b_{j-k}^{(j)} \equiv \delta_{nk},\quad \forall 1 \leq k \leq n,\label{THB}
     \end{equation}
which has the property of
     \begin{equation}
       \sum_{j=1}^{k}\theta_{k-j}^{(k)}\mathcal{D}_{2}u^{j} = \sum_{l=1}^{k}\nabla_{\tau}u^{l}\sum_{j=l}^{k}\theta_{k-j}^{(k)}b_{j-l}^{(j)} = u^{k}-u^{k-1},\quad 1 \leq k \leq N. \label{THD}
     \end{equation}
As noted in \cite[Proposition 2.1]{2020reaction}, the DCC and DOC kernels have the following relations
     \begin{align}
     &\theta_{0}^{(n)} = p_{0}^{(n)},\; \theta_{n-k}^{(n)} = p_{n-k}^{(n)} - p_{n-k-1}^{(n-1)} \; (1 \leq k \leq n-1) \quad \text{and} \quad  p_{n-j}^{(n)} = \sum_{k=j}^{n}\theta_{k-j}^{(k)}\;(1 \leq j \leq n). \label{PTH}
     \end{align}

The DOC and DCC kernels may be calculated explicitly in the following two lemmas.
     \begin{lemma}\label{EQ_theta}\rm{\cite[Lemma\ 2.3]{2019diffusion}}
       The DOC kernels $\theta_{n-j}^{(n)}$ have the following properties:
       \begin{align}
         &\theta_{n-j}^{(n)} > 0,\quad \forall \ 1 \geq j \geq n \qquad \text{and}\qquad  \sum_{j=1}^{n}\theta_{n-j}^{(n)} = \tau_{n},\qquad \text{for}\ n \geq 1.\label{EQ_1633}
       \end{align}

     \end{lemma}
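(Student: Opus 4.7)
The plan is to handle the two claims separately, exploiting only the defining orthogonality relation \eqref{THB} together with the explicit formulas \eqref{B} for the BDF2 kernels.

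For the positivity, I would argue by induction on $n-k$ with $n$ fixed. Setting $k=n$ in \eqref{THB} gives $\theta_0^{(n)} b_0^{(n)} = 1$, so $\theta_0^{(n)} = \tau_n(1+r_n)/(1+2r_n) > 0$, which is the base case. For the inductive step, observe that for $j\geq 2$ the only nonzero entries of $\{b_{j-k}^{(j)}\}$ in $k$ are at $j=k$ and $j=k+1$; hence for $1\leq k < n$ the relation \eqref{THB} collapses to
\begin{equation*}
\theta_{n-k}^{(n)} b_0^{(k)} + \theta_{n-k-1}^{(n)} b_1^{(k+1)} = 0,
\end{equation*}
which I rewrite as the one-step recursion
\begin{equation*}
\theta_{n-k}^{(n)} = -\frac{b_1^{(k+1)}}{b_0^{(k)}}\,\theta_{n-k-1}^{(n)}.
\end{equation*}
From \eqref{B} we read off $b_0^{(k)} > 0$ and $b_1^{(k+1)} = -r_{k+1}^2/[\tau_{k+1}(1+r_{k+1})] < 0$, so the coefficient is strictly positive and the induction carries through, giving $\theta_{n-k}^{(n)} > 0$ for all $1\leq k\leq n$.

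For the sum identity, the cleanest route is to test the orthogonality property \eqref{THD} against the sequence $u^j := t_j$. A direct computation with the definition of $\mathcal{D}_1$ yields $\mathcal{D}_1 t_1 = 1$, and for $n\geq 2$ the definition of $\mathcal{D}_2$ gives
\begin{equation*}
\mathcal{D}_2 t_n = \frac{1+2r_n}{\tau_n(1+r_n)}\tau_n - \frac{r_n^2}{\tau_n(1+r_n)}\tau_{n-1} = \frac{1+2r_n}{1+r_n} - \frac{r_n}{1+r_n} = 1,
\end{equation*}
using $\tau_{n-1} = \tau_n/r_n$. Plugging $u^j = t_j$ (so $\mathcal{D}_2 u^j\equiv 1$) into \eqref{THD} therefore gives $\sum_{j=1}^n \theta_{n-j}^{(n)} = t_n - t_{n-1} = \tau_n$, which is the desired identity.

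No real obstacle is anticipated: the whole argument hinges on the two explicit facts $b_0^{(n)}>0$ and $b_1^{(n)}<0$, plus the observation that $\mathcal{D}_2$ reproduces linear-in-time data exactly. The only mild care point is bookkeeping at the boundary $j=1$, where the BDF1 formula replaces BDF2; this is harmless since $b_0^{(1)}=1/\tau_1>0$ is handled as a separate base case in the induction, and $\mathcal{D}_1 t_1 = 1$ matches the $n\geq 2$ computation above.
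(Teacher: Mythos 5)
Your proof is correct, and it is worth noting that the paper itself offers no argument here --- the lemma is simply imported from \cite[Lemma 2.3]{2019diffusion} --- so your write-up supplies a self-contained justification. The two ingredients both check out: since $b_{j}^{(n)}=0$ for $j\ge 2$, the orthogonality relation \eqref{THB} with $1\le k<n$ really does collapse to the two-term recursion $\theta_{n-k}^{(n)}b_0^{(k)}+\theta_{n-k-1}^{(n)}b_1^{(k+1)}=0$, and the coefficient $-b_1^{(k+1)}/b_0^{(k)}$ is strictly positive because every $r_{k+1}>0$; together with the base case $\theta_0^{(n)}=1/b_0^{(n)}>0$ this gives positivity, and your bookkeeping at $k=1$ (where $b_0^{(1)}=1/\tau_1$) is handled correctly. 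The sum identity via testing \eqref{THD} with $u^j=t_j$ and the exactness $\mathcal{D}_2 t_j\equiv 1$ is clean and valid; note that \eqref{THD} is itself an immediate consequence of \eqref{THB} by exchanging the order of summation, so nothing circular is being used. Your route differs slightly from the one in the cited reference, which derives an explicit closed-form product representation of $\theta_{n-j}^{(n)}$ (analogous to the formula for $p_{n-j}^{(n)}$ in Proposition \ref{pttau}) and reads off both properties from it; your recursion is essentially the same computation stopped one step earlier, while the ``BDF2 reproduces linear data'' trick for the sum is arguably more transparent than summing the explicit product formula. Either approach is fine; yours buys brevity at the cost of not producing the explicit kernel values, which this paper does not need for the DOC kernels anyway.
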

   \begin{proposition}\label{pttau}\rm{\cite[Proposition 2.2]{2020reaction}}
   The DCC kernels $p_{n-k}^{(n)}$ defined in \eqref{PB} satisfy
       \begin{align}
         &p_{n-j}^{(n)} = \sum_{k=j}^{n} \frac {\tau_{k}(1+r_{j})}{1+2r_{j}} \prod_{i=j+1}^{k} \frac {r_{i}}{1+2r_{i}}\leq 2\tau,\quad 1 \leq j \leq n \qquad \text{and} \qquad \sum_{j=1}^{n} p_{n-j}^{(n)} = t_{n},\label{EQ_psum}
       \end{align}
       where $\prod_{i=j+1}^{k} = 1\ \text{for}\ j \geq k$ is defined.
     \end{proposition}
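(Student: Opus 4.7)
The plan is to exploit the sparsity of the BDF2 kernels $b_j^{(n)}$ to reduce the defining convolution identity \eqref{PB} to a short two-term recursion in the index $j$, then solve it explicitly and post-process for the two bounds.

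First I would notice that $b_m^{(j)} = 0$ for $m \geq 2$, so the identity \eqref{PB} collapses to
\begin{equation*}
p_{n-k}^{(n)} b_0^{(k)} + p_{n-k-1}^{(n)} b_1^{(k+1)} = 1 \qquad (1 \leq k \leq n-1),
\qquad p_0^{(n)} b_0^{(n)} = 1.
\end{equation*}
From the terminal condition $p_0^{(n)} = \tau_n(1+r_n)/(1+2r_n)$, which is the $k=n$ case of the claimed formula (the product being empty). I would then run a downward induction on $j = n-1, n-2, \dots, 1$. Substituting the inductive hypothesis for $p_{n-j-1}^{(n)}$ into the recursion and using $-b_1^{(j+1)}/b_0^{(j)} = r_{j+1}(1+r_j)/((1+2r_j)(1+r_{j+1}))$ together with $\tau_{j+1} = r_{j+1}\tau_j$, a short telescoping verifies that the formula for $p_{n-j}^{(n)}$ given in the statement indeed satisfies the recursion; no cleverness is required beyond careful bookkeeping of the factor $(1+r_j)/(1+2r_j)$ which sits outside the sum.

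For the bound $p_{n-j}^{(n)} \leq 2\tau$, I would factor out the leading $(1+r_j)/(1+2r_j) \leq 1$ (since $r_j \geq 0$) and estimate term-by-term. Setting $\mu_i := r_i/(1+2r_i)$, one checks $\mu_i < 1/2$ for all finite $r_i > 0$, so
\begin{equation*}
\sum_{k=j}^n \tau_k \prod_{i=j+1}^k \mu_i \;\leq\; \tau \sum_{k=j}^n (1/2)^{k-j} \;\leq\; 2\tau,
\end{equation*}
which yields the desired inequality.

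For the identity $\sum_{j=1}^n p_{n-j}^{(n)} = t_n$, rather than attacking the explicit formula directly, I would apply the complementary-convolution summation property \eqref{PD} to the test sequence $u^l := t_l$, for which $\nabla_\tau u^l = \tau_l$. A direct calculation using the definitions \eqref{B} of $b_0^{(j)}$ and $b_1^{(j)}$ shows $\mathcal{D}_2 u^j = (1+2r_j)/(1+r_j) - r_j/(1+r_j) = 1$ for $j \geq 2$ (and trivially $= 1$ for $j = 1$), so \eqref{PD} collapses to $\sum_{j=1}^n p_{n-j}^{(n)} \cdot 1 = t_n - t_0 = t_n$.

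The main obstacle, if any, is keeping the algebra straight during the inductive step: the recursion mixes indices $j$, $j+1$ on different factors (one involving $r_j$, one involving $r_{j+1}$), and it is easy to misplace the factor $(1+r_j)/(1+2r_j)$ or absorb $\tau_{j+1} = r_{j+1}\tau_j$ into the wrong place. The remaining pieces (the geometric bound and the telescoping via \eqref{PD}) are essentially automatic once the explicit formula is in hand.
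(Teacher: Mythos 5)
Your proof is correct. The paper itself gives no proof of this proposition (it is imported verbatim from the cited reference), so there is nothing to diverge from; your three-part argument is sound and is essentially the standard derivation. The sparsity of $b_m^{(j)}$ does collapse \eqref{PB} to the two-term recursion $p_{n-k}^{(n)}b_0^{(k)}+p_{n-k-1}^{(n)}b_1^{(k+1)}=1$ with terminal value $p_0^{(n)}=1/b_0^{(n)}$, and the downward induction works exactly as you describe: the factor $-b_1^{(j+1)}/b_0^{(j)}=r_{j+1}(1+r_j)/\bigl((1+2r_j)(1+r_{j+1})\bigr)$ converts the leading coefficient $(1+r_{j+1})/(1+2r_{j+1})$ of $p_{n-j-1}^{(n)}$ into $(1+r_j)/(1+2r_j)$ times the extra product factor $r_{j+1}/(1+2r_{j+1})$, and adding the $k=j$ term $\tau_j(1+r_j)/(1+2r_j)$ completes the sum. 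The bound via $(1+r_j)/(1+2r_j)\leq 1$ and $r_i/(1+2r_i)<1/2$ gives the geometric series estimate $2\tau$, and testing \eqref{PD} with $u^l=t_l$ (for which $\mathcal{D}_2 t_j=(1+2r_j)/(1+r_j)-r_j/(1+r_j)=1$ and $\mathcal{D}_1 t_1=1$) yields $\sum_{j=1}^n p_{n-j}^{(n)}=t_n$ immediately. The only point worth stating explicitly is the base case $j=1$ of the formula, where $r_1=0$ makes $b_0^{(1)}=1/\tau_1$ consistent with the general expression, so no separate treatment is needed.
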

%
We point out that the BDF2 kernels $b^{(n)}_{n-k}(k\geq2)$ still keep  positively defined for any ratio $r_2 >0$ in \Ass{1}. The similar result for the version of $r_2<4.8645$ is presented in \cite{LJWZ21}.
\begin{lemma}\label{bpositive}
 	Assume the time step ratio $r_{k}$ satisfies $\mathbf{A1}$. For any real sequence $\{\omega_{k}\}_{k=1}^{n}$, it holds that
 		\begin{align}
 			&2\omega_{k}\sum_{j=2}^{k}b_{k-j}^{(k)}\omega_{j} \geq \frac {r_{k+1}\sqrt{r_{\max}}}{1+r_{k+1}}\frac {\omega_{k}^{2}}{\tau_{k}} - \frac {r_{k}\sqrt{r_{\max}}}{1+r_{k}}\frac {\omega_{k-1}^{2}}{\tau_{k-1}} + C_r\delta \frac{\omega_{k}^{2}}{\tau_{k}},\quad k \geq 3,\label{bbb}\\
 			&2\sum_{k=2}^{n}\omega_{k}\sum_{j=2}^{k}b_{k-j}^{(k)}\omega_{j} \geq \sum_{k=2}^{n}C_r\delta \frac{\omega_{k}^{2}}{\tau_{k}},\quad n \geq 2 ,
 		\end{align}
 		where $C_{r} =\sqrt{r_{\max}}/(1+r_{\max})^{2}$ and $\delta$ is any small constant satisfying $0 < \delta < r_{\max}$ (see $\mathbf{A1}$).
 	
 	\begin{proof}
 The inequality \eqref{bbb} can be obtained by \cite[Lemma 3.2]{2021beam}.
Summing \eqref{bbb} from 2 to $n$ yields
 		\begin{align*}
 			2\sum_{k=2}^{n}\omega_{k}\sum_{j=2}^{k}b_{k-j}^{(k)}\omega_{j}
 			&\geq \frac{ 2( 1 + 2r_2 ) }{ 1 + r_2 } \frac{ \omega_2^2}{ \tau_2 } + \frac{ r_{n+1} \sqrt{r_{\max}}}{ 1 + r_{n+1} } \frac{ \omega_n^2 }{ \tau_{n} } - \frac{ r_3\sqrt{r_{\max}} }{ 1 + r_3 } \frac{ \omega_2^2 }{  \tau_2 }
 			+ \sum_{k=3}^{n}\frac {\delta\sqrt{r_{\max}}\omega_{k}^{2}}{(1+r_{\max})^{2}\tau_{k}}\\
 			&\geq \left( 2 - \frac{ r_{\max}^{3/2} } { ( 1 + r_{\max} )  } \right) \frac{ \omega_2^2 }{ \tau_2 }
 			+ \sum_{k=2}^{n}\frac {\delta\sqrt{r_{\max}}\omega_{k}^{2}}{(1+r_{\max})^{2}\tau_{k}}\\
 			&\geq \frac{\omega_{2}^{2}}{(1+r_{\max})\tau_{2}}+\sum_{k=2}^{n}\frac {\delta\sqrt{r_{\max}}\omega_{k}^{2}}{(1+r_{\max})^{2}\tau_{k}}\geq \sum_{k=2}^{n}\frac {\delta\sqrt{r_{\max}}\omega_{k}^{2}}{(1+r_{\max})^{2}\tau_{k}},
 		\end{align*}
 where one uses the facts $r_{\max}^{3/2}=1+2r_{\max}$ and $0<\delta<r_{\max}$.
 		The proof is completed.  \end{proof}
 \end{lemma}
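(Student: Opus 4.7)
The plan is to treat the pointwise inequality \eqref{bbb} as given (it is cited verbatim from \cite[Lemma 3.2]{2021beam}) and focus exclusively on the summed version. Since \eqref{bbb} only holds for $k \geq 3$, the base case $k = 2$ must be handled separately by direct computation: from the definition \eqref{B} we have $b_{0}^{(2)} = (1+2r_{2})/[\tau_{2}(1+r_{2})]$, so that $2\omega_{2}\,b_{0}^{(2)}\omega_{2} = \tfrac{2(1+2r_{2})}{1+r_{2}}\tfrac{\omega_{2}^{2}}{\tau_{2}}$. This contributes the leading $\omega_{2}^{2}/\tau_{2}$ term in the display above.

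Next I would sum \eqref{bbb} over $k = 3, 4, \ldots, n$. The pairs $\tfrac{r_{k+1}\sqrt{r_{\max}}}{1+r_{k+1}}\tfrac{\omega_{k}^{2}}{\tau_{k}}$ and $-\tfrac{r_{k}\sqrt{r_{\max}}}{1+r_{k}}\tfrac{\omega_{k-1}^{2}}{\tau_{k-1}}$ telescope cleanly, leaving only the boundary terms $\tfrac{r_{n+1}\sqrt{r_{\max}}}{1+r_{n+1}}\tfrac{\omega_{n}^{2}}{\tau_{n}}$ and $-\tfrac{r_{3}\sqrt{r_{\max}}}{1+r_{3}}\tfrac{\omega_{2}^{2}}{\tau_{2}}$, together with the accumulated $C_{r}\delta\,\omega_{k}^{2}/\tau_{k}$ contributions for $k \geq 3$. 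Combining with the $k = 2$ computation from the previous paragraph, the $\omega_{2}^{2}/\tau_{2}$ coefficient becomes $\tfrac{2(1+2r_{2})}{1+r_{2}} - \tfrac{r_{3}\sqrt{r_{\max}}}{1+r_{3}}$.

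The main obstacle is showing that this residual $\omega_{2}^{2}/\tau_{2}$ coefficient is nonnegative \emph{without any restriction on $r_{2}$}, which is the whole point of the lemma. I would argue as follows: the function $x \mapsto 2(1+2x)/(1+x)$ is monotonically increasing in $x > 0$, so its value at $r_{2}$ is at least its value at $0$, namely $2$. Since $r_{3} \leq r_{\max}$ under \Ass{1}, the subtracted quantity is at most $\tfrac{r_{\max}\sqrt{r_{\max}}}{1+r_{\max}} = \tfrac{r_{\max}^{3/2}}{1+r_{\max}}$. The key algebraic identity $r_{\max}^{3/2} = 1 + 2r_{\max}$ (which is precisely the defining equation $x^{3} = (2x+1)^{2}$ of $r_{\max}$ evaluated at the root) then gives $\tfrac{r_{\max}^{3/2}}{1+r_{\max}} = \tfrac{1+2r_{\max}}{1+r_{\max}} = 2 - \tfrac{1}{1+r_{\max}}$. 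Hence the coefficient of $\omega_{2}^{2}/\tau_{2}$ is bounded below by $\tfrac{1}{1+r_{\max}} > 0$, uniformly in $r_{2}$.

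Finally, I would absorb this leftover $\tfrac{1}{(1+r_{\max})\tau_{2}}\omega_{2}^{2}$ into the $k = 2$ slot of the dissipation sum by noting it dominates $C_{r}\delta\,\omega_{2}^{2}/\tau_{2} = \tfrac{\delta\sqrt{r_{\max}}}{(1+r_{\max})^{2}}\tfrac{\omega_{2}^{2}}{\tau_{2}}$ for every $\delta < r_{\max}$ (since $\delta\sqrt{r_{\max}} < r_{\max}^{3/2} = 1+2r_{\max}$, one has $\delta\sqrt{r_{\max}}/(1+r_{\max}) < 1$). Dropping the nonnegative boundary term $\tfrac{r_{n+1}\sqrt{r_{\max}}}{1+r_{n+1}}\tfrac{\omega_{n}^{2}}{\tau_{n}}$ and extending the sum back to $k = 2$ yields the stated inequality. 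The whole argument is essentially algebraic once \eqref{bbb} is in hand; the only genuinely subtle point is the interplay between the cubic defining $r_{\max}$ and the $k = 2$ boundary term, which is what allows the restriction on $r_{2}$ to be removed.
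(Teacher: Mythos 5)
Your overall route---handling $k=2$ by direct computation of $2\omega_2 b_0^{(2)}\omega_2=\tfrac{2(1+2r_2)}{1+r_2}\tfrac{\omega_2^2}{\tau_2}$, telescoping \eqref{bbb} over $3\le k\le n$, and controlling the residual $\omega_2^2/\tau_2$ coefficient via the monotonicity of $2(1+2x)/(1+x)$ and the identity $r_{\max}^{3/2}=1+2r_{\max}$---is exactly the paper's. However, your final absorption step contains a genuine error. You bound $r_3\le r_{\max}$, obtain the surplus $\tfrac{1}{1+r_{\max}}\tfrac{\omega_2^2}{\tau_2}$, and then claim it dominates the missing $k=2$ dissipation term $C_r\delta\,\tfrac{\omega_2^2}{\tau_2}=\tfrac{\delta\sqrt{r_{\max}}}{(1+r_{\max})^2}\tfrac{\omega_2^2}{\tau_2}$ for every $\delta<r_{\max}$, justified by ``$\delta\sqrt{r_{\max}}<r_{\max}^{3/2}=1+2r_{\max}$, hence $\delta\sqrt{r_{\max}}/(1+r_{\max})<1$.'' That implication is false: since $1+2r_{\max}>1+r_{\max}$, the premise does not yield $\delta\sqrt{r_{\max}}\le 1+r_{\max}$. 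The domination you need is equivalent to $\delta\le(1+r_{\max})/\sqrt{r_{\max}}=\sqrt{r_{\max}}+1/\sqrt{r_{\max}}\approx 2.66$, so your argument breaks down for $\delta$ in roughly $(2.66,\,4.86)$, a range permitted by the lemma's hypothesis $0<\delta<r_{\max}$.

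The repair is to invoke \Ass{1} at full strength, namely $r_3\le r_{\max}-\delta$ rather than $r_3\le r_{\max}$. Since $x\mapsto x/(1+x)$ is increasing,
\[
\frac{r_{\max}^{3/2}}{1+r_{\max}}-\frac{r_3\sqrt{r_{\max}}}{1+r_3}
\;\ge\;\sqrt{r_{\max}}\left(\frac{r_{\max}}{1+r_{\max}}-\frac{r_{\max}-\delta}{1+r_{\max}-\delta}\right)
=\frac{\delta\sqrt{r_{\max}}}{(1+r_{\max})(1+r_{\max}-\delta)}
\;\ge\;\frac{\delta\sqrt{r_{\max}}}{(1+r_{\max})^2}=C_r\delta,
\]
so the gap between the subtracted boundary term and its extremal value $r_{\max}^{3/2}/(1+r_{\max})$ already supplies the $k=2$ dissipation contribution, and the leftover $\tfrac{1}{1+r_{\max}}\tfrac{\omega_2^2}{\tau_2}$ is simply discarded at the end. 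This is precisely how the paper's chain of inequalities passes from $\sum_{k=3}^{n}$ to $\sum_{k=2}^{n}$. Everything else in your argument is sound.
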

\subsection{The new properties of DOC kernels and DCC kernels}


    \begin{lemma}\label{thpositive}
    	Assume the condition $\mathbf{A1}$ holds. Then the DOC kernels $\theta_{n-k}^{(n)}( n \geq 2)$ defined in \eqref{THB} are  positive definite. Moreover, for any real sequences $\{\omega_{j}\}_{j=1}^{n}$, it holds that
    	\begin{equation}\label{theta_posi_express}
    		2\sum_{k=2}^{n}\omega_{k}\sum_{j=2}^{k}\theta_{k-j}^{(k)}\omega_{j} \geq C_r \delta \sum_{k=2}^{n}\tau_{k}^{-1}(\sum\limits_{j=2}^{k}\theta_{k-j}^{(k)}\omega_{j})^{2},\quad \forall n \geq 2,
    	\end{equation}
	 where $C_{r} = \sqrt{r_{\max}}/(1+r_{\max})^{2}$.
    \end{lemma}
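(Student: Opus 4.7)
The plan is to reduce the positive-definiteness of the DOC kernels to the positive-definiteness of the BDF2 kernels already established in Lemma \ref{bpositive}. The key observation is that, by the defining identity \eqref{THB}, the lower-triangular matrices $\Theta = (\theta_{n-j}^{(n)})$ and $B = (b_{n-j}^{(n)})$ satisfy $\Theta B = I$; since they are lower triangular with nonzero diagonals, we also obtain the dual orthogonality
\[
\sum_{k=l}^{j}b_{j-k}^{(j)}\theta_{k-l}^{(k)} = \delta_{jl}, \quad 2 \leq l \leq j \leq n.
\]

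First I would introduce the auxiliary sequence
\[
v_k := \sum_{j=2}^{k}\theta_{k-j}^{(k)}\omega_{j}, \quad k \geq 2,
\]
and use the dual orthogonality, via a routine interchange of the summation order, to verify the inverse formula $\omega_k = \sum_{j=2}^{k}b_{k-j}^{(k)}v_j$ for every $k \geq 2$. Note that all sums begin at $j=2$, so the BDF1 kernel at the starting level never enters, avoiding any inconsistency.

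Next I would substitute this inverse formula into the left-hand side of \eqref{theta_posi_express} and rewrite it in terms of $\{v_k\}$:
\[
2\sum_{k=2}^{n}\omega_{k}\Bigl(\sum_{j=2}^{k}\theta_{k-j}^{(k)}\omega_{j}\Bigr) = 2\sum_{k=2}^{n}\omega_k v_k = 2\sum_{k=2}^{n}v_k \sum_{j=2}^{k}b_{k-j}^{(k)}v_j.
\]
Then I would invoke Lemma \ref{bpositive} with $\{v_k\}$ in place of $\{\omega_k\}$ to conclude
\[
2\sum_{k=2}^{n}v_k \sum_{j=2}^{k}b_{k-j}^{(k)}v_j \geq \sum_{k=2}^{n}C_r\delta \frac{v_k^2}{\tau_k},
\]
which is exactly the lower bound claimed. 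The strict positive definiteness statement then follows because the linear transformation $\omega \mapsto v$ is invertible (its matrix is lower triangular with diagonal entries $\theta_0^{(k)}=1/b_0^{(k)}>0$), so $v \neq 0$ whenever $\omega \neq 0$, and the right-hand side above is strictly positive.

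I expect the only real obstacle to be purely bookkeeping: carefully swapping the summation order to verify the inverse relation and confirming that the cutoff at $j=2$ is preserved throughout, so that Lemma \ref{bpositive}, which is stated for sums starting at $k=2$, applies directly to the sequence $\{v_k\}$ without any adjustment for the BDF1 starting step.
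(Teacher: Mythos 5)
Your proposal is correct and follows essentially the same route as the paper: a change of variables between $\{\omega_k\}$ and $\{v_k\}$ via the mutually inverse lower-triangular kernel matrices, followed by an application of Lemma \ref{bpositive} to $\{v_k\}$. The only cosmetic difference is direction — the paper defines $\bm v$ implicitly by $\omega_j=\sum_{l=2}^{j}b_{j-l}^{(j)}v_l$ and derives $v_k=\sum_{j=2}^{k}\theta_{k-j}^{(k)}\omega_j$ from \eqref{THB}, whereas you define $v_k$ explicitly and invert using the (equally valid) dual orthogonality.
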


\begin{proof}
       For any $\bm{\omega} = (\omega_2,\dots,\omega_n) \in \mathbb{R}^n$, we construct the vector $\bm{v} = (v_2,\dots,v_n) \in \mathbb{R}^n$ by
       \begin{equation}\label{vnomega}
       	\omega_j = \sum_{l=2}^{j} b_{j-l}^{(j)} v_l,\ 2 \leq j \leq n.
       \end{equation}
It is easy to check $\bm{v}$ is uniquely determined by $\bm{\omega}$.
  Multiplying $\theta^{(k)}_{k-j}$ on both sides of \eqref{vnomega} and summing $j$ from 2 to $k$, one finds
       \begin{align}
       \sum_{j=2}^k \theta^{(k)}_{k-j} \omega_j = \sum_{j=2}^k\theta^{(k)}_{k-j}\sum_{l=2}^{j} b_{j-l}^{(j)} v_l=\sum_{l=2}^kv_l\sum_{j=l}^k\theta^{(k)}_{k-j} b_{j-l}^{(j)} =v_k,\label{EQ_06061510}
       \end{align}
       where one exchanges the order of summations and uses the definition \eqref{THB}.
       Again, multiplying   \eqref{EQ_06061510} by $\omega_{k}$ and taking summation from 2 to n, we find
       \begin{equation*}
       	 2\sum_{k=2}^{n}\omega_{k} \sum_{j=2}^{k} \theta_{k-j}^{(k)} \omega_{j}
       	 = 2\sum_{k=2}^{n} \omega_{k} v_{k}
       	 = 2\sum_{k=2}^{n} v_{k} \sum_{j=2}^{k} b_{k-j}^{(k)} v_{j}.
       \end{equation*}
Thus, the proof is completed by      Lemma \ref{bpositive} and \eqref{EQ_06061510}.
     \end{proof}

The following proposition gives a sharp estimate of the DCC kernels, which play a key role in the stability and convergence analysis.
 \begin{proposition}\label{Pro_p}
      Assume \Ass{1} holds and $c_r = r_{\max}^{5/2}$. The DCC kernels $p_{n-k}^{(n)}$ defined in \eqref{PB} satisfy
       \begin{align}
       	 p_{n-j}^{(n)} &\leq c_r\delta^{-1} \sqrt{\tau_j}\sqrt{\tau},\qquad  2\leq j \leq n,\label{EQ_1500}\\
       p_{n-1}^{(n)} &\leq \tau_1+c_r\delta^{-1} \sqrt{\tau_2}\sqrt{\tau}.\label{EQ_1528}
       \end{align}
     \end{proposition}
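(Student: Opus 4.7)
My plan is to start from the closed-form expression
\begin{equation*}
p_{n-j}^{(n)} = \sum_{k=j}^{n} \frac{\tau_k(1+r_j)}{1+2r_j}\prod_{i=j+1}^k \frac{r_i}{1+2r_i}
\end{equation*}
recorded in Proposition \ref{pttau}, and to treat the two cases $j \geq 2$ and $j = 1$ separately. The split is needed because for $j \geq 2$ every index $i$ in the inner product satisfies $i \geq 3$, so the uniform bound $r_i \leq r_{\max}-\delta$ from $\mathbf{A1}$ applies; by contrast, for $j=1$ the unrestricted ratio $r_2$ enters the product and must be isolated by hand.

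For the main case $j \geq 2$, I would use the identity $\tau_k = \tau_j\prod_{i=j+1}^k r_i$ to rewrite each summand in a geometric-mean form. With $g(r) := r^{3/2}/(1+2r)$, a short calculation gives
\begin{equation*}
\tau_k \prod_{i=j+1}^k \frac{r_i}{1+2r_i} = \sqrt{\tau_j\tau_k}\prod_{i=j+1}^k g(r_i).
\end{equation*}
Since $g$ is strictly increasing on $(0,\infty)$, $\mathbf{A1}$ yields $g(r_i) \leq \rho := g(r_{\max}-\delta)$. Combining this with $(1+r_j)/(1+2r_j) \leq 1$ and $\sqrt{\tau_k} \leq \sqrt{\tau}$, the summands are controlled by a geometric progression, and summing gives
\begin{equation*}
p_{n-j}^{(n)} \leq \sqrt{\tau}\sqrt{\tau_j}\sum_{k=j}^n \rho^{k-j} \leq \frac{\sqrt{\tau}\sqrt{\tau_j}}{1-\rho}.
\end{equation*}

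The main obstacle is then to show the sharp bound $1-\rho \geq \delta/r_{\max}^{5/2}$, which is exactly what produces the constant $c_r = r_{\max}^{5/2}$. Using the defining relation $r_{\max}^{3/2} = 1+2r_{\max}$ to rewrite $1+2(r_{\max}-\delta) = r_{\max}^{3/2}-2\delta \leq r_{\max}^{3/2}$, the claim reduces to the purely analytic inequality
\begin{equation*}
\phi(\delta) := r_{\max}^{3/2}-(r_{\max}-\delta)^{3/2}-2\delta \geq \frac{\delta}{r_{\max}}, \qquad \delta \in [0,r_{\max}].
\end{equation*}
The function $\psi(\delta) := \phi(\delta)-\delta/r_{\max}$ satisfies $\psi(0)=0$ trivially and $\psi(r_{\max})=0$ (again by $r_{\max}^{3/2} = 1+2r_{\max}$), while $\psi''(\delta)=-(3/4)(r_{\max}-\delta)^{-1/2} < 0$. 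A strictly concave function vanishing at both endpoints of an interval is non-negative on that interval, which establishes the inequality and hence \eqref{EQ_1500}.

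Finally, for $j=1$, the $k=1$ term contributes exactly $\tau_1$ (since $r_1 = 0$), and the tail sum matches the $j=2$ kernel through the identity
\begin{equation*}
p_{n-1}^{(n)} - \tau_1 = \sum_{k=2}^n \tau_k \prod_{i=2}^k \frac{r_i}{1+2r_i} = \frac{r_2}{1+r_2}\, p_{n-2}^{(n)}.
\end{equation*}
Using $r_2/(1+r_2) < 1$ together with the already-proved estimate \eqref{EQ_1500} applied at $j=2$ immediately yields \eqref{EQ_1528}, completing the proposed proof.
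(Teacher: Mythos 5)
Your proof is correct and follows essentially the same route as the paper: the same closed form from Proposition \ref{pttau}, the same geometric-mean rewriting via $g(r)=r^{3/2}/(1+2r)$ and monotonicity, the same geometric-series summation, and the same reduction of the $j=1$ case to the $j=2$ kernel. Two small remarks: your concavity argument supplies a rigorous verification of the key inequality $1-g(r_{\max}-\delta)\geq \delta\, r_{\max}^{-5/2}$, which the paper asserts in \eqref{EQ_cr} without detail, and your coefficient $r_2/(1+r_2)$ in the $j=1$ decomposition is in fact the correct one (the paper's \eqref{EQ_1515} writes $r_2/(1+2r_2)$; since both factors are below $1$, the stated bound \eqref{EQ_1528} is unaffected).
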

\begin{proof}
From Proposition \ref{pttau} and the condition \Ass{1}, the DCC kernels $p^{(n)}_{n-j}$ can be bounded by
       \begin{align}
       	 p_{n-j}^{(n)} &= \sum_{k=j}^n \frac{\tau_k (1+r_j) }{1+2r_j} \prod_{i=j+1}^{k} \frac {r_{i}}{1+2r_{i}} = \sqrt{\tau_j} \sum_{k=j}^n \sqrt{\tau_k} \frac{1+r_j}{1+2r_j} \prod_{i=j+1}^{k} \frac {r_{i}^{3/2}}{1+2r_{i}}\nonumber\\
       	 &\leq \sqrt{\tau_j} \sum_{k=j}^n \sqrt{\tau_k} \prod_{i=j+1}^{k} \frac {( r_{\max} - \delta )^{3/2}}{1+2(r_{\max} - \delta)}\nonumber\\
       	 &\leq \sqrt{\tau_j} \sum_{k=j}^n \sqrt{\tau_k} \left( \frac { ( r_{\max} - \delta )^{3/2} } { 1+2(r_{\max} - \delta) } \right)^{k-j}\nonumber\\
       	 &\leq \sqrt{\tau_j\tau}\left(1-\frac { ( r_{\max} - \delta )^{3/2} } { 1+2(r_{\max} - \delta) }\right)^{-1}, \quad \text{for}\ 2\leq j \leq n.
       \label{EQ_ppp}
       \end{align}
     Noting that $(r_{\max})^{3/2} = 1+2r_{\max}$, we get
       \begin{align}
       1-\frac { ( r_{\max} - \delta )^{3/2} } { 1+2(r_{\max} - \delta) }&=\frac { ( r_{\max})^{3/2} } { 1+2r_{\max}}-\frac { ( r_{\max} - \delta )^{3/2} } { 1+2(r_{\max} - \delta) }
       \geq  \frac{\sqrt{r_{\max}}}{(1+2r_{\max})^2}\delta=r_{\max}^{-5/2}\delta.\label{EQ_cr}
       \end{align}
Thus, by inserting \eqref{EQ_cr} into \eqref{EQ_ppp}, we obtain \eqref{EQ_1500}.

For $j=1$, the estimate is different because of $r_2\in \mathbb{R}^+$. In this situation, from the definition we have the following estimate
\begin{align}
p^{(n)}_{n-1} &= \sum_{k=1}^n \tau_k \prod_{i=2}^{k} \frac {r_{i}}{1+2r_{i}} =\tau_1 + \frac{r_2}{1+2r_2}\sum_{k=2}^n \frac{\tau_k (1+r_2) }{1+2r_2} \prod_{i=3}^{k} \frac {r_{i}}{1+2r_{i}}=\tau_1+\frac{r_2}{1+2r_2}p^{(n)}_{n-2}.\label{EQ_1515}
\end{align}
The proof is completed by inserting the estimate \eqref{EQ_1500} into \eqref{EQ_1515}.
\end{proof}

\section{Stability and convergence analysis for IMEX BDF2 scheme\label{Section3}}

 We now consider the stability and  convergence analysis for the IMEX BDF2 scheme \eqref{IMEXsp}, which requires a discrete Gr\"{o}nwall inequality given as follows.
     \begin{lemma}\label{gronwall}
       Assume that $\lambda >$ 0 and the sequences $\{v_j\}_{j=1}^N$ and $\{\eta_j\}_{j=1}^N$ are nonnegative. If
       \begin{equation*}
         v_n \leq \lambda \sum_{j = 1}^{n-1} {\tau_j v_j} + \sum_{j=0}^{n} {\eta_j},\quad \text{for}\quad 1 \leq n \leq N,
       \end{equation*}
       then it holds that
       \begin{equation*}
         v_n \leq \exp \ ( \lambda t_{n-1} ) \sum_{j = 0}^n {\eta_j},\quad \text{for}\quad 1 \leq n \leq N.
       \end{equation*}
       \end{lemma}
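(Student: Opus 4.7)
The plan is to proceed by induction on $n$, following the standard route for a discrete Gr\"onwall inequality on a nonuniform mesh. The base case $n=1$ is essentially immediate: the sum $\lambda\sum_{j=1}^{0}\tau_jv_j$ is empty, so the hypothesis collapses to $v_1\le \sum_{j=0}^{1}\eta_j$, which agrees with the claim because $t_0=0$ makes $\exp(\lambda t_0)=1$.

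For the inductive step, I would assume the bound $v_k\le\exp(\lambda t_{k-1})\sum_{l=0}^{k}\eta_l$ holds for every $k<n$ and plug it into the given recurrence. After the substitution, I would use nonnegativity of the $\eta_j$ to replace each partial sum $\sum_{l=0}^{j}\eta_l$ by the larger $\sum_{l=0}^{n}\eta_l$ and factor this quantity out. What remains is to show
\begin{equation*}
1+\lambda\sum_{j=1}^{n-1}\tau_j\exp(\lambda t_{j-1})\;\le\;\exp(\lambda t_{n-1}),
\end{equation*}
after which the induction closes.

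The key analytic ingredient is the Riemann-sum comparison $\tau_j\exp(\lambda t_{j-1})\le\int_{t_{j-1}}^{t_j}\exp(\lambda s)\,\mathrm{d}s$, valid because $\exp(\lambda s)$ is nondecreasing in $s$ on $[t_{j-1},t_j]$ and the interval has length $\tau_j$. Summing this over $j=1,\dots,n-1$ telescopes to $\lambda^{-1}(\exp(\lambda t_{n-1})-1)$, which multiplied by $\lambda$ and increased by $1$ gives exactly $\exp(\lambda t_{n-1})$, as required.

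There is no genuine obstacle here; the only mild subtlety is handling the nonuniform steps $\tau_j$, and that is resolved cleanly by the integral comparison above rather than by a geometric-sum argument, since the mesh is not uniform. The inequality is sharp in the sense that equality would be approached as $\max_j\tau_j\to 0$, which is consistent with the continuous Gr\"onwall bound $v(t)\le\exp(\lambda t)\,\eta$.
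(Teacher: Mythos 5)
Your induction argument is correct: the base case is handled properly, the replacement of $\sum_{l=0}^{j}\eta_l$ by $\sum_{l=0}^{n}\eta_l$ uses nonnegativity legitimately, and the comparison $\lambda\sum_{j=1}^{n-1}\tau_j\exp(\lambda t_{j-1})\le \exp(\lambda t_{n-1})-1$ via the integral of the increasing function $\exp(\lambda s)$ closes the step cleanly. The paper omits the proof, describing it only as following from the standard induction hypothesis, and your argument is exactly that standard route, so there is nothing further to compare.
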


   The proof of Lemma \ref{gronwall} can be derived by the standard induction hypothesis and we omit it here.

   \subsection{Stability }
Let $u^n_h$ and $\hat{u}^n_h$ be solutions of the IMEX BDF2 scheme \eqref{IMEXsp} with  initial values $u^0_h, \hat{u}^0_h$ and source terms $f^n_h, \hat{f}^n_h$, respectively.  Let $\phi_h^n:=u^n_h-\hat{u}^n_h,  \zeta^n_h:=f^n_h-\hat{f}^n_h(0\leq n\leq N)$ be the perturbations. Then $\phi_h^n$ solves
     \begin{equation}\label{EQ_Pequation}
       \mathcal{D}_{2}\phi_h^n - c_1 \Delta_h \phi_h^{n} + c_2 \nabla_h \phi_h^{n} + c_3 \phi_h^n + \mathcal{J}(E\phi_h^{n-1}) = \zeta^n_h,\qquad 1 \leq n \leq N,
     \end{equation}
     with the initial value $\phi_h^0(x)$ and   homogeneous Dirichlet boundary condition.  Different from the spectral method and finite element method, the finite difference method may bring  extra difficulties in the analysis. For example,  the discrete integration by parts formula that $\langle -\Delta_h u_h, v_h\rangle = \langle -\nabla_h u_h, \nabla_h v_h\rangle$ is not satisfied due to the incompatible discretizations of $u_{xx}$ and $u_x$. The following lemmas will play a key role in  overcoming this difficulty.
\begin{lemma}\label{Lemma_D2n}
It holds for  any $\epsilon>0$ and $u_h,v_h\in \mathcal{V}_h$ that
\begin{align}
2\langle -\Delta_h u_h,u_h-v_h \rangle&\geq |u_h|_1^2-|v_h|_1^2, \label{EQ_0229}\\
|u_h|_1&\geq \|\nabla_h u_h\|, \label{EQ_0242}\\
|u_h|_1^2 +\epsilon^2\|u_h\|^2&\geq 2\epsilon  \langle\nabla_h u_h, u_h\rangle.\label{EQ_1501}
\end{align}
\end{lemma}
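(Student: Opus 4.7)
The plan is to handle the three inequalities in turn, exploiting the fact that the $H^1$ semi-norm $|u_h|_1^2 = \langle -\Delta_h u_h, u_h\rangle$ can be rewritten, via discrete summation by parts, as the squared $\ell^2$-norm of the forward difference $\delta^+_h u_i := (u_{i+1}-u_i)/h$. Since $u_h,v_h$ vanish on $\partial\Omega_h$, the identity $\langle -\Delta_h u_h, v_h\rangle = \sum_{i=0}^{M-1} h\,(\delta^+_h u_i)(\delta^+_h v_i)$ holds symmetrically in $u_h,v_h$, and in particular $|u_h|_1 = \|\delta^+_h u_h\|$.

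For \eqref{EQ_0229}, I would expand
\[
2\langle -\Delta_h u_h, u_h-v_h\rangle = 2|u_h|_1^2 - 2\langle \delta^+_h u_h,\delta^+_h v_h\rangle,
\]
and then apply the elementary inequality $2ab\le a^2+b^2$ termwise (equivalently, Cauchy–Schwarz plus AM–GM) to obtain $2\langle \delta^+_h u_h,\delta^+_h v_h\rangle \le |u_h|_1^2 + |v_h|_1^2$, which yields \eqref{EQ_0229} immediately.

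For \eqref{EQ_0242}, the key observation is the identity
\[
\nabla_h u_i = \frac{u_{i+1}-u_{i-1}}{2h} = \tfrac12\bigl(\delta^+_h u_i + \delta^+_h u_{i-1}\bigr).
\]
Squaring and using convexity $(a+b)^2/4\le (a^2+b^2)/2$ gives $(\nabla_h u_i)^2 \le \tfrac12\bigl((\delta^+_h u_i)^2+(\delta^+_h u_{i-1})^2\bigr)$. Summing against $h$ from $i=1$ to $M-1$ and then reindexing the shifted sum, the two half-contributions combine into at most $\sum_{i=0}^{M-1} h(\delta^+_h u_i)^2 = |u_h|_1^2$, which is exactly \eqref{EQ_0242}. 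The only subtlety is making sure the Dirichlet boundary values are used so no spurious boundary terms appear in the reindexing, which is automatic since $u_h\in\mathcal{V}_h$.

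Finally, \eqref{EQ_1501} is a direct consequence of \eqref{EQ_0242}: Cauchy–Schwarz gives $\langle\nabla_h u_h,u_h\rangle \le \|\nabla_h u_h\|\,\|u_h\|$, then AM–GM with weight $\epsilon$ bounds $2\epsilon\|\nabla_h u_h\|\,\|u_h\|\le \|\nabla_h u_h\|^2+\epsilon^2\|u_h\|^2$, and \eqref{EQ_0242} replaces $\|\nabla_h u_h\|^2$ by $|u_h|_1^2$ to conclude. The only real work lies in \eqref{EQ_0242}; neither \eqref{EQ_0229} nor \eqref{EQ_1501} introduces further difficulty, and no restriction on $\epsilon$ beyond positivity is needed.
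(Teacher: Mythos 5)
Your proposal is correct and follows essentially the same route as the paper: the inequality $2a(a-b)\ge a^2-b^2$ for \eqref{EQ_0229}, the observation that $\nabla_h u_i$ is the average of the two one-sided differences combined with $(a+b)^2\le 2a^2+2b^2$ for \eqref{EQ_0242} (the paper uses the exact identity $2a^2+2b^2=(a+b)^2+(a-b)^2$, which additionally identifies the remainder $\tfrac{h^2}{4}\|\Delta_h u_h\|^2+\tfrac{u_1^2+u_{M-1}^2}{2h}$, but the bound is the same), and Cauchy--Schwarz plus Young for \eqref{EQ_1501}.
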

\begin{proof}
The first claim \eqref{EQ_0229} holds by the inequality $2a(a-b) \geq a^2-b^2$, i.e.,
\begin{align*}
2\langle -\Delta_h u_h,u_h-v_h \rangle&= \frac2{h}\sum_{i=1}^M (u_i-u_{i-1})\big((u_i-u_{i-1})-(v_i-v_{i-1})\big)\nonumber\\
&\geq  \frac1{h}\sum_{i=1}^M \big((u_i-u_{i-1})^2-(v_i-v_{i-1})^2\big)=|u|_1^2-|v|_1^2.\end{align*}
And the third claim \eqref{EQ_1501} can be immediately derived by the second claim \eqref{EQ_0242} via the Young's inequality.
Hence, we only focus on proving \eqref{EQ_0242}.
From the definition of discrete semi-norm, we have
\begin{align*}
 |u|_1^2  &= \frac1{h}\sum_{i=1}^{M-1}(2u_i-u_{i+1}-u_{i-1})u_i\\
&=\frac1{2h}\sum_{i=1}^{M-1}\left((u_{i+1}-u_i)^2+(u_{i}-u_{i-1})^2\right)
+\frac{u_1^2+u_{M-1}^2}{2h}\\
&=\frac1{4h}\sum_{i=1}^{M-1}\left((u_{i+1}-u_{i-1})^2+(u_{i+1}-2u_i+u_{i-1})^2\right)+\frac{u_1^2+u_{M-1}^2}{2h}\\
&=\|\nabla_h u_h\|^2+\frac{h^2}{4}\|\Delta_h u_i\|^2+\frac{u_1^2+u_{M-1}^2}{2h},
\end{align*}
where the identity $2a^2+2b^2 = (a+b)^2+(a-b)^2$ is used. The proof is completed.
\end{proof}
\begin{lemma}\label{Lemma_Dn}
Assume $\mathbf{A1}$ holds. Then it holds for  any $\epsilon>0$ and $u_h^k\in \mathcal{V}_h \;(1\leq k\leq n)$ that
\begin{align}
2\sum_{k=2}^{n}\sum_{j=2}^k \theta^{(k)}_{k-j}\langle-\Delta_h u_h^j,u_h^k\rangle+ \frac{\epsilon^2}{C_r\delta}\sum_{k=2}^n \tau_k \|u^k\|^2&\geq2\epsilon \sum_{k=2}^{n}\sum_{j=2}^k \theta^{(k)}_{k-j}\langle\nabla_h u^j, u^k\rangle,\label{EQ_1402}
\end{align}
where  $C_r$ is defined in \eqref{theta_posi_express}.
\end{lemma}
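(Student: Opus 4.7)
The plan is to introduce the auxiliary sequence $v^k := \sum_{j=2}^k \theta^{(k)}_{k-j} u^j$ so that both sides of \eqref{EQ_1402} can be rewritten in terms of $v^k$ and $u^k$. Since $-\Delta_h$ and $\nabla_h$ act only on the spatial variable, they commute with the temporal DOC convolution, giving
\begin{equation*}
\sum_{j=2}^k \theta^{(k)}_{k-j} \langle -\Delta_h u^j, u^k\rangle = \langle -\Delta_h v^k, u^k\rangle,\qquad \sum_{j=2}^k \theta^{(k)}_{k-j} \langle \nabla_h u^j, u^k\rangle = \langle \nabla_h v^k, u^k\rangle.
\end{equation*}

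First, I would upgrade the scalar positive-definiteness result Lemma \ref{thpositive} to a bilinear form carrying the weight $-\Delta_h$. Because $-\Delta_h$ is symmetric positive semidefinite on $\mathcal{V}_h$, it admits a factorization $-\Delta_h = L^\top L$, so that $\langle -\Delta_h u^j, u^k\rangle = \langle L u^j, L u^k\rangle$. Applying Lemma \ref{thpositive} componentwise to the scalar sequence $\{(Lu^k)_i\}_{k=2}^n$ for each spatial grid index $i$, multiplying by $h$, and summing in $i$ yields
\begin{equation*}
2\sum_{k=2}^{n}\sum_{j=2}^k \theta^{(k)}_{k-j}\langle -\Delta_h u^j, u^k\rangle \geq C_r\delta \sum_{k=2}^n \tau_k^{-1} \|Lv^k\|^2 = C_r\delta \sum_{k=2}^n \tau_k^{-1} |v^k|_1^2,
\end{equation*}
since $\|Lv^k\|^2 = \langle -\Delta_h v^k, v^k\rangle = |v^k|_1^2$. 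Next, I would bound the right-hand side of \eqref{EQ_1402}: Cauchy--Schwarz together with \eqref{EQ_0242} gives $|\langle \nabla_h v^k, u^k\rangle| \leq \|\nabla_h v^k\|\,\|u^k\| \leq |v^k|_1\,\|u^k\|$, and Young's inequality with the balanced weight $\tau_k/(C_r\delta)$ yields
\begin{equation*}
2\epsilon\,\langle \nabla_h v^k, u^k\rangle \leq \frac{C_r\delta}{\tau_k}|v^k|_1^2 + \frac{\epsilon^2 \tau_k}{C_r\delta}\|u^k\|^2.
\end{equation*}
Summing over $k$ and combining with the lower bound above produces \eqref{EQ_1402}, the $|v^k|_1^2/\tau_k$ contributions cancelling exactly.

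The main obstacle is the first step: promoting the scalar DOC positive-definiteness of Lemma \ref{thpositive} to a statement about the bilinear form $\langle -\Delta_h\,\cdot\,,\,\cdot\,\rangle$. The absence of a clean discrete integration-by-parts identity linking $\Delta_h$ with $\nabla_h$ means one cannot collapse the mixed term into $\|\nabla_h v^k\|^2$ in any direct way, which is precisely the finite-difference difficulty flagged before the lemma. The symmetric factorization of $-\Delta_h$ (or, equivalently, its spectral decomposition) circumvents this by reducing the matter to a componentwise application of Lemma \ref{thpositive}, and the weight $\tau_k/(C_r\delta)$ in Young's inequality is chosen so that the auxiliary semi-norm terms cancel, leaving exactly the $\epsilon^2/(C_r\delta)$ coefficient claimed on the $\tau_k\|u^k\|^2$ term.
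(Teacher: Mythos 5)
Your proof is correct and follows essentially the same route as the paper: the factorization $-\Delta_h=L^{\top}L$ with $L$ the (scaled) backward-difference operator is exactly the componentwise summation-by-parts expansion the paper uses to transfer Lemma \ref{thpositive} to the bilinear form $\langle-\Delta_h\cdot,\cdot\rangle$, and the balanced Young step matches the paper's. Your packaging via $v^k=\sum_{j=2}^{k}\theta^{(k)}_{k-j}u^j$ and the norm inequality \eqref{EQ_0242} is a slightly cleaner presentation than the paper's fully index-level computation, but the underlying argument is the same.
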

\begin{proof}
On the one hand, from the definition of discrete Laplacian operator $\Delta_h$, we have
\begin{align*}
&\quad\langle-\Delta_h u_h^j,u_h^k\rangle  = \frac1{h}\sum_{i=1}^{M-1}(2u_i^j-u_{i+1}^j-u_{i-1}^j)u_i^k\\
&=\frac1{2h}\sum_{i=1}^{M-1}\left((u^j_{i+1}-u^j_i)(u^k_{i+1}-u^k_{i})+(u^j_{i}-u^j_{i-1})(u^k_{i}-u^k_{i-1})\right)
+\frac{u^j_1u^k_1+u^j_{M-1}u^k_{M-1}}{2h}.
\end{align*}
From Lemma \ref{thpositive}, one further has
\begin{align}
&\quad 2\sum_{k=2}^{n}\sum_{j=2}^k \theta^{(k)}_{k-j}\langle-\Delta_h u_h^j,u_h^k\rangle\nonumber\\
&\geq \frac{C_r\delta}{2h}\sum_{i=1}^{M-1}\sum_{k=2}^n\frac1{\tau_k} \left( \Big(\sum_{j=2}^k \theta_{k-j}^{(k)}(u_{i+1}^j-u_i^j)\Big)^2+ \Big(\sum_{j=2}^k \theta_{k-j}^{(k)}(u_{i}^j-u_{i-1}^j)\Big)^2\right).\label{EQ_1525}
\end{align}
On the other hand,
\begin{align}
&\quad 2\epsilon\sum_{k=2}^{n}\sum_{j=2}^k \theta^{(k)}_{k-j} \langle\nabla_h u^j, u^k\rangle \\
&= \epsilon\sum_{k=2}^{n}\sum_{j=2}^k \theta^{(k)}_{k-j}\sum_{i=1}^{M-1} \left((u_{i+1}^j-u^j_i)+(u_i^j-u^j_{i-1})\right)u^k_i\nonumber\\
&= \epsilon\sum_{i=1}^{M-1}\sum_{k=2}^{n}\left(\sum_{j=2}^k \theta^{(k)}_{k-j}\Big((u_{i+1}^j-u^j_i)+(u_i^j-u^j_{i-1})\Big)\right)u^k_i\nonumber\\
&\leq  \frac{C_r\delta}{2h}\sum_{i=1}^{M-1}\sum_{k=2}^n\frac1{\tau_k} \left( \Big(\sum_{j=2}^k \theta_{k-j}^{(k)}(u_{i+1}^j-u_i^j)\Big)^2+ \Big(\sum_{j=2}^k \theta_{k-j}^{(k)}(u_{i}^j-u_{i-1}^j)\Big)^2\right)\nonumber\\
&\quad+\frac{\epsilon^2}{C_r\delta}\sum_{i=1}^{M-1}h\sum_{k=2}^n \tau_k (u_i^k)^2.\label{EQ_1526}
\end{align}
Thus, combining \eqref{EQ_1525} with \eqref{EQ_1526}, we have \eqref{EQ_1402}. The inequality \eqref{EQ_1501} can be  proven similarly and is omitted here. The proof is completed.
\end{proof}

     We now present the result of stability.
  \begin{theorem}\label{prioritheorem}
       Assume the  condition $\mathbf{A1}$ holds. Then  the IMEX BDF2 scheme \eqref{IMEXsp} is unconditionally stable in the $L^2$-norm. This is, if the maximum time-step size satisfies
        \begin{align}
\tau\leq \min\left\{\frac1{2C_1},\frac1{2({c_2}^2 / {c_1} + 4|c_3| + 2C_J )},\frac1{4(5C_J+4|c_3|)}\right\},\label{EQ_2055}
\end{align}
it holds  for $n\geq 2$ that
  \begin{align}
  \|\phi^{n}_h\|&\leq 2\exp(2C_1t_{n-1})\Big(\big(3+r_2\big)\|\phi_h^0\| + 7\sum_{k=1}^{n}  p_{n-k}^{(n)}\|\zeta^k_h \| +6p^{(n)}_{n-2}C_J(\tau_1+\tau_2)\|\zeta^1_h\|+C_2\sqrt{\tau}|\phi_h^0|_1\Big)\label{EQ_stability}\\
     &\leq 2\exp(2C_1t_{n-1})\Big(\big(3+r_2\big)\|\phi_h^0\|+7t_n\max_{1\leq k\leq n}\|\zeta^k_h\|+6p^{(n)}_{n-2}C_J(\tau_1+\tau_2)\|\zeta^1_h\|+C_2\sqrt{\tau}|\phi_h^0|_1\Big),\\
         \|\phi^1_h\| &\leq 2 \|\phi^0_h\| + 3 t_1 \|\zeta^1_h\|,\label{EQ_1441}
       \end{align}
       where
       \begin{align*}
C_1 :=  \frac{{c_2}^2}{ (c_1 C_r \delta)} +2|c_3|+2C_J ( 1 +2 r_{\max} ), \; C_2:=\frac{4c_r}{\delta}\sqrt{ c_1r_2}. \end{align*}
     \end{theorem}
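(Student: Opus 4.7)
The plan is to split the argument into the starting step $n=1$ and the main recursion $n\geq 2$, and to use the DOC kernels to reduce the BDF2 operator to a plain backward difference before performing an energy estimate closed by Lemma \ref{gronwall}. For $n=1$, I would test the BDF1 equation against $2\phi_h^1$, use $\langle -\Delta_h\phi_h^1,\phi_h^1\rangle = |\phi_h^1|_1^2$ for dissipation, absorb the convection via \eqref{EQ_1501} in Lemma \ref{Lemma_D2n}, control the nonlocal term with \eqref{EQ_CJ}, and invoke the step restriction \eqref{EQ_2055} to obtain \eqref{EQ_1441} directly.

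For $n\geq 2$, the plan is to multiply \eqref{EQ_Pequation} at each step $j=1,\dots,k$ by $\theta_{k-j}^{(k)}$ and sum in $j$. The orthogonality identity \eqref{THD} collapses the BDF2 operator on the left into the first-order difference $\nabla_\tau \phi_h^k$, producing
\begin{equation*}
  \nabla_\tau \phi_h^k + \sum_{j=1}^{k}\theta_{k-j}^{(k)}\bigl(-c_1\Delta_h\phi_h^j + c_2\nabla_h\phi_h^j + c_3\phi_h^j + \mathcal{J}_h(E\phi_h^{j-1})\bigr) = \sum_{j=1}^{k}\theta_{k-j}^{(k)}\zeta_h^j.
\end{equation*}
Taking the inner product with $2\phi_h^k$, summing $k=2,\dots,n$, and using $2\langle \nabla_\tau\phi_h^k,\phi_h^k\rangle \geq \|\phi_h^k\|^2-\|\phi_h^{k-1}\|^2$ yields $\|\phi_h^n\|^2-\|\phi_h^1\|^2$ on the left and a non-negative dissipative double sum coming from the Laplacian via Lemma \ref{thpositive}. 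The key algebraic cancellation is to absorb the convection double sum against that dissipative contribution through Lemma \ref{Lemma_Dn} with $\epsilon = |c_2|/c_1$, which introduces precisely the admissible perturbation $(c_2^2/(c_1 C_r\delta))\sum_{k=2}^n\tau_k\|\phi_h^k\|^2$ that feeds the leading part of $C_1$.

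The remaining zeroth-order, nonlocal, and forcing terms are handled by Cauchy--Schwarz together with the row-sum identity $\sum_{j=1}^{k}\theta_{k-j}^{(k)}=\tau_k$ from \eqref{EQ_1633} and the bound \eqref{EQ_CJ}; the extrapolation $E\phi_h^{j-1}$ brings in coefficients bounded by $1+2r_{\max}$, which explains the $2C_J(1+2r_{\max})$ contribution to $C_1$. After interchanging the order of summation, the source contributions collapse into the DCC-weighted form $\sum_{k=1}^{n}p_{n-k}^{(n)}\|\zeta_h^k\|$ thanks to the identity $p_{n-j}^{(n)}=\sum_{k=j}^{n}\theta_{k-j}^{(k)}$ from \eqref{PTH}. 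The resulting recursion of the form $\|\phi_h^n\|^2 \leq \text{(starting data)} + 2C_1\sum_{k=1}^{n-1}\tau_k\|\phi_h^k\|^2 + 2\max_{1\leq k\leq n}\|\phi_h^k\|\cdot(\text{source sums})$ is closed by Lemma \ref{gronwall}, producing the factor $\exp(2C_1 t_{n-1})$, after which a square-root step returns a bound linear in $\|\phi_h^n\|$. The gradient contribution $C_2\sqrt{\tau}|\phi_h^0|_1$ arises from the $j=1$ row of the Laplacian double sum, which is not absorbed by the dissipation mechanism and is instead controlled by the sharp bound on $p_{n-1}^{(n)}$ in Proposition \ref{Pro_p}, yielding the $\sqrt{\tau_2}\sqrt{\tau}$ scaling.

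The main obstacle is handling the start-up transition from BDF1 to BDF2 without picking up exponential growth in the unrestricted ratio $r_2$. Both the explicit extrapolation $E\phi_h^{j-1}=(1+r_j)\phi_h^{j-1}-r_j\phi_h^{j-2}$ and the leftover $j=1$ rows of the DOC double sums, which fall outside the positive-definiteness range of Lemma \ref{thpositive}, can inflate low-index contributions by powers of $r_2$. Tracking these carefully is precisely what produces the polynomial factor $(3+r_2)\|\phi_h^0\|$ and the correction term $6p_{n-2}^{(n)}C_J(\tau_1+\tau_2)\|\zeta_h^1\|$ in the final bound; without the sharp DCC estimate of Proposition \ref{Pro_p} and the strict positive-definiteness of Lemma \ref{thpositive}, the $r_2$ dependence would leak into $C_1$ via the Gr\"{o}nwall factor and defeat the graded-mesh convergence analysis of Section \ref{Section4}.
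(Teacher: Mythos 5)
Your overall skeleton --- reduction of the BDF2 operator by the DOC kernels, an energy estimate in which the convection is absorbed into the Laplacian dissipation via Lemma \ref{Lemma_Dn} with $\epsilon=c_2/c_1$, DCC-weighted source terms through \eqref{PTH}, and closure by Lemma \ref{gronwall} --- is exactly the paper's strategy. However, there is a genuine gap at the start-up. You sum the DOC kernels over $j=1,\dots,k$, so that \eqref{THD} collapses the temporal operator cleanly to $\nabla_\tau\phi_h^k$; the price is that every spatial double sum then contains a $j=1$ row, and in particular the Laplacian contributes $2c_1\sum_{k=2}^{n}\theta_{k-1}^{(k)}\langle-\Delta_h\phi_h^1,\phi_h^k\rangle$. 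This row lies outside the positive-definiteness range of Lemmas \ref{thpositive} and \ref{Lemma_Dn} (as you note), but it also cannot be bounded by the quantities you control: Cauchy--Schwarz gives either $|\phi_h^1|_1\,|\phi_h^k|_1$ (and $|\phi_h^k|_1$ is never estimated) or $\|\Delta_h\phi_h^1\|\,\|\phi_h^k\|$ (which costs $h^{-2}$). The sharp bound on $p_{n-1}^{(n)}$ in Proposition \ref{Pro_p} controls only the kernel weight $\sum_k\theta_{k-1}^{(k)}=p_{n-1}^{(n)}-\tau_1$, not the operator acting on $\phi_h^1$. The paper sidesteps this by applying the DOC kernels only to the genuine BDF2 levels $j=2,\dots,k$; identity \eqref{EQ1522} then produces the single leftover term $\theta_{k-1}^{(k)}b_0^{(1)}\nabla_\tau\phi_h^1$ instead of a $j=1$ row in every spatial sum.

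Relatedly, your attribution of $C_2\sqrt{\tau}\,|\phi_h^0|_1$ to ``the $j=1$ row of the Laplacian double sum'' is not substantiated: that row involves $\phi_h^1$, not $\phi_h^0$. In the paper the $H^1$ semi-norm of the initial perturbation enters through a second first-step energy identity that your proposal omits entirely: one must test the $n=1$ equation against $2\nabla_\tau\phi_h^1$ and use $2\langle-\Delta_h u,u-v\rangle\geq|u|_1^2-|v|_1^2$ (inequality \eqref{EQ_0229}) to obtain $\|\nabla_\tau\phi_h^1\|\leq 2\sqrt{c_1\tau_1}\,|\phi_h^0|_1+\cdots$; combining this with $\frac{2}{\tau_1}\bigl(p_{n-1}^{(n)}-\tau_1\bigr)\leq \frac{2c_r}{\delta}\sqrt{\tau_2\tau}/\tau_1$ from \eqref{EQ_1528} yields exactly $C_2\sqrt{\tau}\,|\phi_h^0|_1$ with $C_2=\frac{4c_r}{\delta}\sqrt{c_1 r_2}$. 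Without this auxiliary estimate neither the leftover term $\theta_{k-1}^{(k)}b_0^{(1)}\nabla_\tau\phi_h^1$ nor your $j=1$ rows can be closed, so the proof as proposed does not go through; your $n=1$ bound \eqref{EQ_1441} and the remainder of the $n\geq2$ argument do match the paper.
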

     \begin{proof}
For  $n \geq 2$, it follows from \eqref{EQ_Pequation} and \eqref{THD} that
\begin{align}
\sum_{j=2}^k \theta_{k-j}^{(k)} \mathcal{D}_2 \phi_h^j = \sum_{j=1}^k \theta_{k-j}^{(k)} \mathcal{D}_2 \phi_h^j-\theta_{k-1}^{(k)}b^{(1)}_0 \nabla_\tau \phi_h^1=\nabla_\tau \phi_h^k-\theta_{k-1}^{(k)}b^{(1)}_0\nabla_\tau \phi_h^1.\label{EQ1522}
\end{align}
Applying the identity  \eqref{EQ1522} to the  perturbed equation \eqref{EQ_Pequation}, one has
       \begin{equation}\label{usumjk}
         \nabla_\tau \phi_h^k -\theta_{k-1}^{(k)}b_0^{(1)}\nabla_\tau \phi_h^1 -  c_1 \sum_{j=2}^k \theta_{k-j}^{(k)}\Delta_h \phi^j_h = - \sum_{j=2}^{k} \theta_{k-j}^{(k)} \left ( c_2 \nabla_h \phi^j_h + c_3 \phi_h^j + \mathcal{J}(E\phi_h^{j-1}) - \zeta^j_h \right ).
       \end{equation}
       Taking inner products with $2\phi_h^k$ on both sides of \eqref{usumjk} and summing the resulting from 2 to $n$, we get
       \begin{align}\label{EQ_06081420}
        &\quad  2\sum_{k=2}^n\langle \nabla_\tau \phi_h^k,\phi_h^k\rangle - 2 c_1\sum_{k=2}^n\sum_{j=2}^k \theta_{k-j}^{(k)} \langle\Delta_h \phi^j_h,\phi_h^k \rangle
          \notag\\
         &= - 2\sum_{k=2}^n\sum_{j=2}^{k} \theta_{k-j}^{(k)}\langle c_2 \nabla_h \phi^j_h + c_3 \phi_h^j + \mathcal{J}(E\phi_h^{j-1}) - \zeta^j_h, \phi_h^k \rangle+ 2\sum_{k=2}^n \theta_{k-1}^{(k)} \langle b_0^{(1)} \nabla_\tau \phi_h^1, \phi_h^k \rangle.
       \end{align}
Taking $\epsilon = c_2/c_1$ in \eqref{EQ_1402} and inserting the resulting into  \eqref{EQ_06081420},  one has
       \begin{equation}\label{usum2}
       \begin{aligned}
         2\sum_{k=2}^n \left\langle \nabla_\tau \phi_h^k , \phi_h^k \right\rangle\leq& \frac{ {c_2}^2 }{ c_1 C_r \delta} \sum_{k=2}^n\tau_k \left\| \phi_h^k \right\|^2 +2\sum_{k=2}^n \left\langle \theta_{k-1}^{(k)}b_0^{(1)} \nabla_\tau \phi_h^1 , \phi_h^k \right\rangle\\
         &+ 2\sum_{k=2}^n \left\langle  \sum_{j=2}^{k} \theta_{k-j}^{(k)} \left ( c_3 \phi_h^j + \mathcal{J}(E\phi_h^{j-1}) - \zeta^j_h \right ) , \phi_h^k \right\rangle.
         \end{aligned}
       \end{equation}
Applying $2a(a-b)\geq a^2-b^2$,  \eqref{EQ_CJ} and the Cauchy-Schwarz inequality to \eqref{usum2}, one has
       \begin{align}
         \|\phi^n_h\|^2 - \|\phi^1_h\|^2 &\leq \frac{ {c_2}^2 }{ c_1 C_r \delta} \sum_{k=2}^n\tau_k \left\| \phi^k_h \right\|^2 + 2|c_3| \sum_{k=2}^n\sum_{j=2}^k \theta_{k-j}^{(k)}\|\phi^j_h\|\|\phi^k_h\| + 2\sum_{k=2}^n\|\sum_{j=2}^k \theta_{k-j}^{(k)}\zeta^j_h\| \|\phi^k_h\| \notag\\
         &+ 2C_J \sum_{k=3}^n\sum_{j=3}^k \theta_{k-j}^{(k)} \bigg( ( 1 + r_{\max} ) \|\phi^{j-1}_h\| + r_{\max} \|\phi^{j-2}_h\| \bigg) \|\phi^k_h\|\\
         &+ 2C_J \sum_{k=2}^n \theta_{k-2}^{(k)} \bigg( ( 1 + r_2 ) \|\phi^{1}_h\| +r_2 \|\phi^{0}_h\| \bigg) \|\phi^k_h\| + \frac{2}{\tau_1} \sum_{k=2}^n \theta_{k-1}^{(k)}  \|\nabla_\tau \phi^{1}_h\| \|\phi^k_h\|.\notag
       \end{align}
       Selecting an integer $n_0$ ($0 \leq n_0 \leq n$) such that $\|\phi^{n_0}\| = \underset{0 \leq k \leq n}{{\max}} \| \phi^k \|,$ then we have
       \begin{equation}\begin{split}
         \|\phi^{n_0}_h\|^2 \leq &\|\phi^1_h\|\|\phi^{n_0}_h\| + \frac{ {c_2}^2 }{ c_1 C_r \delta} \|\phi^{n_0}_h\| \sum_{k=2}^{n_0}\tau_k \left\| \phi^k_h \right\| + 2|c_3| \|\phi^{n_0}_h\|\sum_{k=2}^{n_0} \|\phi^k_h\| \sum_{j=2}^k \theta_{k-j}^{(k)}\\
         &+ 2C_J ( 1 +2 r_{\max} )\|\phi^{n_0}_h\|\sum_{k=3}^{n_0} \|\phi^k_h\| \sum_{j=3}^k \theta_{k-j}^{(k)} + 2\|\phi^{n_0}_h\|\sum_{k=2}^{n_0} \|\sum_{j=2}^k \theta_{k-j}^{(k)}\zeta^j_h\|\\
         &+ 2C_J \|\phi^{n_0}_h\|\sum_{k=2}^{n_0} \theta_{k-2}^{(k)} \left( (1 + r_2 )\| \phi^1_h \| + r_2 \| \phi^0_h \| \right) + \frac{2}{\tau_1} \|\phi^{n_0}_h\|\sum_{k=2}^{n_0} \theta_{k-1}^{(k)}\| \nabla_\tau \phi^1_h \|.\label{EQ_2323}
       \end{split}\end{equation}
       Eliminating a $\|\phi^{n_0}_h\|$ from both sides, and setting $
C_1 := \frac{ {c_2}^2 }{ c_1 C_r \delta} +2|c_3|+2C_J ( 1 +2 r_{\max} ),
$ one arrives at
              \begin{equation}
              \begin{split}
         \|\phi^{n}_h\|\leq &\|\phi^1_h\| + C_1 \sum_{k=2}^{n}\tau_k \left\| \phi^k_h \right\|  + 2\sum_{k=2}^{n}  p_{n-k}^{(n)}\|\zeta^k_h\|\\
         &+ 2C_Jp^{(n)}_{n-2} \left( (1 + r_2 )\| \phi^1_h \| + r_2 \| \phi^0_h \| \right) + \frac{2}{\tau_1} (p^{(n)}_{n-1}-\tau_1)\| \nabla_\tau \phi^1_h \|,\label{EQ_2324}
       \end{split}\end{equation}
       where one exchanges the order of summation, uses the property \eqref{PTH} and the facts $\phi_h^{n_0}\geq \phi_h^k(0\leq k\leq n)$ and $n_0\leq n$. It is necessary  to bound the terms $\|\phi_h^1\|$ and $\|\nabla_\tau \phi_h^1\|$. For the first term $\|\phi_h^1\|$,
setting $n=1$ in \eqref{EQ_Pequation} and   taking inner products with $2\phi_h^k$ on both sides, one has
       \begin{equation}
       	2\left\langle \nabla_\tau \phi_h^1 , \phi_h^1 \right\rangle + 2c_1 \tau_1\langle-\Delta_h \phi^1_h, \phi^1_h\rangle
       	= - 2c_2\tau_1 \left\langle \nabla_h \phi^1_h , \phi_h^1 \right\rangle - 2c_3 \tau_1 \| \phi_h^1 \|^2 - 2\tau_1 \left\langle \mathcal{J}(\phi_h^0)+\zeta^1_h , \phi_h^1 \right\rangle.\label{EQ_1954}
       \end{equation}
       Applying the property \eqref{EQ_CJ}, the identity $2a(a-b) = a^2-b^2+(a-b)^2$, the inequality \eqref{EQ_1501}  and the Cauchy-Schwarz inequality to \eqref{EQ_1954}, we obtain
       \begin{equation}
       	\|\phi_h^1\|^2 - \|\phi_h^0\|^2+\|\nabla_\tau \phi_h^1\|^2 \leq
       	(\frac{  {c_2}^2 }{ 2c_1 } +2 |c_3|)\tau_1 \left\| \phi_h^1 \right\|^2  + 2 C_J \tau_1 \|\phi_h^0\| \|\phi_h^1\| + 2 \tau_1 \|\zeta^1_h\| \|\phi_h^1\|.\label{EQ_2328}
       \end{equation}
       Applying the Young's inequality  again  to \eqref{EQ_2328}, one has
              \begin{equation}
       \left(\frac34-(\frac{ {c_2}^2 }{2 c_1 } +2 |c_3|+C_J)\tau_1\right) 	\|\phi_h^1\|^2  \leq
       	  (1+C_J\tau_1)\|\phi_h^0\|^2 +  4\tau_1^2 \|\zeta^1_h\|^2.
       \end{equation}
              Taking $\tau \leq 1/ (2{c_2}^2 / {c_1} + 8|c_3| + 4C_J )$, we finally get
                     \begin{equation}\label{EQ_1424}
	\|\phi_h^1\|^2  \leq \frac52\|\phi_h^0\|^2 +  8\tau_1^2 \|\zeta^1_h\|^2.
       \end{equation}
Noting that $\sqrt{a^2+b^2}\leq |a|+|b|$, we further have
                     \begin{equation}\label{EQ_1435}
	\|\phi_h^1\|  \leq 2\|\phi_h^0\| +  3\tau_1 \|\zeta^1_h\|.
       \end{equation}
We now estimate the second term $\frac{\|\nabla_\tau \phi_h^1\|}{\tau_1}$.
Setting $n=1$ in \eqref{EQ_Pequation} and   taking inner products on both sides   with $2\nabla_\tau \phi_h^1$, one has
       \begin{align}
       &\quad	\frac{2\| \nabla_\tau \phi_h^1\|^2}{\tau_1} - 2 c_1 \left\langle \Delta_h \phi^1_h ,\nabla_\tau \phi_h^1 \right\rangle\nonumber\\
       	&= - 2c_2 \left\langle \nabla_h \phi^1_h ,\nabla_\tau \phi_h^1 \right\rangle - 2c_3 \left\langle \phi_h^1,\nabla_\tau \phi_h^1\right \rangle - 2 \left\langle \mathcal{J}(\phi_h^0) , \nabla_\tau\phi_h^1 \right\rangle -2 \left\langle \zeta^1_h , \nabla_\tau\phi_h^1 \right\rangle.\label{EQ_1415}
       \end{align}
Hence, applying \eqref{EQ_CJ}, \eqref{EQ_0229} and the Cauchy-Schwarz inequality to \eqref{EQ_1415}, we obtain
\begin{align}
\frac{2\|\nabla_\tau \phi_h^1\|^2}{\tau_1}+c_1 |\phi^1_h|_1^2 \leq c_1 |\phi^0_h|_1^2+2\left(c_2\|\nabla_h \phi^1_h\|+|c_3|\|\phi_h^1\|+C_J\|\phi_h^0\|+\|\zeta^1_h\|\right)\|\nabla_\tau \phi_h^1\|.\label{EQ_1421}
\end{align}
Then applying  the inequality \eqref{EQ_0242} and the Young's inequality  to \eqref{EQ_1421}, one  yields
\begin{align}
\frac{\|\nabla_\tau \phi_h^1\|^2}{\tau_1}\leq c_1 |\phi^0_h|_1^2+ \frac{c_2^2}{c_1}\|\nabla_\tau\phi_h^1\|^2+\tau_1\left(3c_3^2\|\phi_h^1\|^2+3C_J^2\|\phi_h^0\|^2+3\|\zeta^1_h\|^2\right).
\end{align}
Taking $\tau\leq  1/{2({c_2}^2 / {c_1} + 4|c_3| + 2C_J )}\leq  c_1/(2c_2^2)$,    we arrive at
\begin{align}\label{EQ_1426}
\frac{\|\nabla_\tau \phi_h^1\|^2}{\tau_1}\leq 2c_1 |\phi^0_h|_1^2+2\tau_1\left(3c_3^2\|\phi_h^1\|^2+3C_J^2\|\phi_h^0\|^2+3\|\zeta^1_h\|^2\right).
\end{align}

Inserting the inequality \eqref{EQ_1424} into \eqref{EQ_1426}, one has
\begin{align*}
\|\nabla_\tau \phi_h^1\|^2\leq 2c_1\tau_1|\phi^0_h|_1^2+3\tau_1^2(5c_3^2+2C_J^2)\|\phi_h^0\|^2+6\tau_1^2(8c_3^2\tau_1^2+1)\|\zeta^1_h\|^2,
\end{align*}
which together with the inequality $\sqrt{a^2+b^2}\leq |a|+|b|$ implies
\begin{align}
\|\nabla_\tau \phi_h^1\|\leq 2\sqrt{c_1\tau_1}|\phi^0_h|_1+\tau_1(4|c_3|+3C_J)\|\phi_h^0\|+\tau_1(7|c_3|\tau_1+3)\|\zeta^1_h\|.\label{EQ_1434}
\end{align}
Inserting the estimates \eqref{EQ_1435} and \eqref{EQ_1434} into \eqref{EQ_2324}, one produces
\begin{align*}
 \|\phi^{n}_h\| \;&\leq2\|\phi^0_h\| +3\tau_1\|\zeta^1_h\|+2C_1 \sum_{k=2}^{n}\tau_k \left\| \phi^k_h \right\| + 2\sum_{k=2}^{n}  p_{n-k}^{(n)}\|\zeta^k_h\|\\
&+2\left(3p^{(n)}_{n-2}C_J(\tau_1+\tau_2)+(p^{(n)}_{n-1}-\tau_1)(7|c_3|\tau_1+3)\right)\|\zeta^1_h\|\\
 &+2p^{(n)}_{n-2}\left((5+3r_2)C_J+4|c_3|\right)\|\phi^0_h\|+C_2\sqrt{\tau}|\phi_h^0|_1,
 \end{align*}
where one uses Lemma \ref{EQ_theta} and Proposition \ref{Pro_p} and identity \eqref{EQ_1525}.
It follows from \eqref{EQ_2055} that
 \begin{equation}
 \begin{split}
 \|\phi^{n}_h\|\leq &(6+2r_2)\|\phi^0_h\| + 2C_1 \sum_{k=2}^{n-1}\tau_k \left\| \phi^k_h \right\|+ 14\sum_{k=1}^{n}  p_{n-k}^{(n)}\|\zeta^k_h\|\\
 &+12p^{(n)}_{n-2}C_J(\tau_1+\tau_2)\|\zeta^1_h\|+2C_2\sqrt{\tau}|\phi_h^0|_1, \end{split}\end{equation}
where Proposition \ref{Pro_p} and identity \eqref{EQ_1515} are used. Hence, it follows from Lemma \ref{gronwall} that
\begin{align}
\|\phi^{n}_h\|\leq 2\exp(2C_1t_{n-1})&\Big((3+r_2)\|\phi_h^0\| + 7\sum_{k=1}^{n}  p_{n-k}^{(n)}\|\zeta^k_h\| +6p^{(n)}_{n-2}C_J(\tau_1+\tau_2)\|\zeta^1_h\|+C_2\sqrt{\tau}|\phi_h^0|_1\Big).\label{EQ_1442}
\end{align}
According to \eqref{EQ_psum}, we finally arrive at
\begin{align*}
		\|\phi^{n}_h\|&\leq 2\exp(2C_1t_{n-1})\Big((3+r_2)\|\phi_h^0\| + 7\sum_{k=1}^{n}  p_{n-k}^{(n)}\|\zeta^k_h\|+6p^{(n)}_{n-2}C_J(\tau_1+\tau_2)\|\zeta^1_h\|+C_2\sqrt{\tau}|\phi_h^0|_1\Big)\nonumber\\
		&\leq 2\exp(2C_1t_{n-1})\Big((3+r_2)\|\phi_h^0\|+7t_n\max_{1\leq k\leq n}\|\zeta^k_h\|+6p^{(n)}_{n-2}C_J(\tau_1+\tau_2)\|\zeta^1_h\|+C_2\sqrt{\tau}|\phi_h^0|_1\Big).
\end{align*}
The proof is completed.
\end{proof}
\begin{remark}
Although the stability estimate \eqref{EQ_stability} relies on the $H^1$ semi-norm of the initial perturbation, i.e., $|\phi_h^0|_1$, its weight shall tends to zero as $\tau\to 0$. In this sense, the $H^1$ semi-norm $|\phi_h^0|_1$ has mild effect on stability of the IMEX-BDF2 scheme \eqref{IMEXsp}. Actually, the  effect of  $H^1$ semi-norm $|\phi_h^0|_1$ can be completely removed if $r_k<1+\sqrt{2}$ for all $3\leq k\leq N$. More specifically, under such a  severe restriction, the DCC kernels may be further estimated as
\begin{align}
p_{n-j}^{(n)} = \frac{ (1+r_j)\tau_j }{1+2r_j}\sum_{k=j}^n \prod_{i=j+1}^{k} \frac {r_{i}^2}{1+2r_{i}}\leq C\tau_j,\quad 2\leq j\leq n,
\end{align}
where $C$ is a constant independent of ratio $r_2$ and mesh sizes $\tau, h$.
Hence, the last term in \eqref{EQ_2324} can be bounded by $2r_2(\|\phi^1_h\|+\|\phi^0_h\|)$ and the $H^1$ semi-norm $|\phi_h^0|_1$ is removed.
\end{remark}

   \subsection{Consistence and convergence}
   Set $e^n_h = u(t_n, x_h) - u^n_h, x_h\in \Omega_h$. From \eqref{continuous}, the error function satisfies the governing equation
   \begin{equation}
     \mathcal{D}_{2}e^{n}_h - c_1 \Delta_h e^{n}_h + c_2 \nabla_h e^{n}_h + c_3 e^{n}_h + \mathcal{J}_h(Ee^{n-1}_h) = \xi^n_h + \eta^n_h +\nu^n_h,\quad\text{for}\quad 1 \leq n \leq N,
   \end{equation}
   where $\xi^n_h:= \mathcal{J}(Eu(t_{n-1},x_h)) - \mathcal{J}(u(t_n,x_h)), \eta^n:= \mathcal{D}_{2}u(t_n,x_h) - \partial_t u(t_n,x_h)$ with $ x_h\in \mathcal{V}_h$,  and $\nu^n_h$ represents the spatial truncation error. For the uniform spatial mesh, it is known the  spatial truncation error has second-order accuracy. This is,  there exists a constant $C_s$ such that $\|\nu^n_h\|\leq C_s h^2$.

     \begin{lemma}\label{tilded}
     Under the assumption \ref{assumpation}, it holds that
\begin{align}
\|\xi^j_h\| \leq &C_J \bar{C}(\tau_j^2t_{j-1}^{\alpha-2}+\tau_j\tau_{j-1}t_{j-2}^{\alpha-2}),\quad j\geq 3,\label{EQ_2119}\\
\|\xi^2_h\|\leq &C_J\bar{C}r_2\tau_1^\alpha(r_2+1/\alpha),\label{EQ_2122}\\
\|\xi^1_h\|\leq &C_J\bar{C}\tau_1^\alpha/\alpha, \label{EQ_2124}
\end{align}
     \end{lemma}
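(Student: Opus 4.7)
The plan is to reduce everything to bounding $\|Eu(t_{n-1})-u(t_n)\|$, since \eqref{EQ_CJ} gives $\|\xi^n_h\|\leq C_J\|Eu(t_{n-1})-u(t_n)\|$. I would handle the three regimes $n\geq 3$, $n=2$, $n=1$ separately, because the extrapolation operator is defined differently at $n=1$ and because the singularity at $t=0$ behaves differently when the stencil $\{t_{n-2},t_{n-1},t_n\}$ straddles the origin.

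For $n\geq 3$, I would expand $u(t_n)$ and $u(t_{n-2})$ around $t_{n-1}$ by Taylor's theorem with integral remainder:
\begin{align*}
u(t_n)&=u(t_{n-1})+\tau_n u'(t_{n-1})+\int_{t_{n-1}}^{t_n}(t_n-s)u''(s)\,ds,\\
u(t_{n-2})&=u(t_{n-1})-\tau_{n-1}u'(t_{n-1})+\int_{t_{n-2}}^{t_{n-1}}(s-t_{n-2})u''(s)\,ds.
\end{align*}
Plugging into $Eu(t_{n-1})-u(t_n)=(1+r_n)u(t_{n-1})-r_n u(t_{n-2})-u(t_n)$, the first-order terms cancel thanks to $r_n\tau_{n-1}=\tau_n$, leaving
\[
Eu(t_{n-1})-u(t_n)=-r_n\!\int_{t_{n-2}}^{t_{n-1}}\!(s-t_{n-2})u''(s)\,ds-\!\int_{t_{n-1}}^{t_n}\!(t_n-s)u''(s)\,ds.
\]
Taking norms, applying Assumption \ref{assumpation} (so $\|u''(s)\|\leq \bar C s^{\alpha-2}$), and using the crude bounds $s-t_{n-2}\leq\tau_{n-1}$ with $s^{\alpha-2}\leq t_{n-2}^{\alpha-2}$ on the first interval and analogously on the second, one immediately reaches \eqref{EQ_2119}.

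For $n=2$, the same expansion around $t_1$ gives
\[
Eu(t_1)-u(t_2)=-r_2\!\int_0^{t_1}\!s\,u''(s)\,ds-\!\int_{t_1}^{t_2}\!(t_2-s)u''(s)\,ds,
\]
where $\int_{t_0}^{t_1}(s-t_0)u''(s)ds$ simplifies using $t_0=0$. The key point is that the weight $s$ exactly tames the singularity: $\int_0^{t_1} s\cdot s^{\alpha-2}ds=t_1^\alpha/\alpha=\tau_1^\alpha/\alpha$, producing the term $C_J\bar C r_2\tau_1^\alpha/\alpha$. For the second integral, since $s\geq t_1$ there, I would bound $s^{\alpha-2}\leq t_1^{\alpha-2}$ and then $\int_{t_1}^{t_2}(t_2-s)ds=\tau_2^2/2$, giving (up to a harmless constant) $\bar C r_2^2\tau_1^\alpha$, thereby yielding \eqref{EQ_2122}.

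The $n=1$ case is the simplest: because $Eu^0=u^0$, we have $\xi^1_h=\mathcal J(u(0)-u(t_1))$, and $u(t_1)-u(0)=\int_0^{t_1}u'(s)ds$. Using Assumption \ref{assumpation} with $k=1$ we get $\int_0^{t_1}\|u'(s)\|ds\leq \bar C\int_0^{t_1}s^{\alpha-1}ds=\bar C\tau_1^\alpha/\alpha$, which combined with \eqref{EQ_CJ} delivers \eqref{EQ_2124}. The only delicate point throughout is the $n=2$ estimate, because the stencil involves $t_0=0$ where $u''$ blows up; that is where the cancellation of the leading-order Taylor terms (via $r_n\tau_{n-1}=\tau_n$) together with the integrability of $s^{\alpha-1}$ for $\alpha>0$ is essential, and it is where the explicit dependence on $r_2$ enters the final bound.
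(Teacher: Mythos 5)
Your proposal is correct and follows essentially the same route as the paper: a Taylor expansion with integral remainder about $t_{j-1}$, cancellation of the first-order terms via $r_j\tau_{j-1}=\tau_j$ to obtain the remainder $R^j_h=\int_{t_{j-1}}^{t_j}(t-t_j)\partial_{tt}u\,\d t+r_j\int_{t_{j-2}}^{t_{j-1}}(t_{j-2}-t)\partial_{tt}u\,\d t$, and then the bound \eqref{EQ_CJ} together with Assumption \ref{assumpation}, treating $j=1,2$ separately because the stencil touches the singular point $t_0=0$. You in fact supply the routine integral estimates that the paper leaves implicit.
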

     \begin{proof}
     By using the Taylor expansion, it is easy to check that the integro error $\xi^j_h:= \mathcal{J}(Eu(t_{j-1},x_h)) - \mathcal{J}(u(t_j,x_h))(x_h\in \mathcal{V}_h, 1\leq j\leq N)$ can be expressed as
     \begin{equation}
       \xi^j_h:= \mathcal{J}(R^j_h),
     \end{equation}
     where
     \begin{equation}\label{dexpress}\begin{split}
       &R^j_h: = \int_{t_{j-1}}^{t_j} ( t - t_j) \partial_{tt} u(t,x_h) \d t + r_j \int_{t_{j-2}}^{t_{j-1}} ( t_{j-2} - t ) \partial_{tt} u(t,x_h)  \d t,\quad 2 \leq j \leq N,\\
       &R^1_h: = u(t_0,x_h) - u(t_1,x_h) = - \int_0^{t_1} \partial_{t} u(t,x_h)  \d t.
     \end{split}\end{equation}
Combining \eqref{EQ_CJ} and assumption \ref{assumpation}, one immediately has \eqref{EQ_2119}--\eqref{EQ_2124}.
The proof is completed.
     \end{proof}

\begin{lemma}\label{truncation}
Under the assumption \ref{assumpation}, for the truncation error $\eta^j$ it holds that
 \begin{align}
 \|\eta^j_h\|  &\leq 2\bar{C}\tau_j^2t_{j-1}^{\alpha-3}+\frac12\bar{C}\tau_{j-1}^2t_{j-2}^{\alpha-3},\quad j\geq 2,\label{EQ_2145} \\
 \|\eta^2_h\|  &\leq ( 2r_2^2+1/(2\alpha))\bar{C}\tau_{1}^{\alpha-1},\label{EQ_2146}\\
 \|\eta^1_h\|  &\leq \frac{\bar{C}}{\alpha}\tau_1^{\alpha-1},\label{EQ_2147}
 \end{align}
\end{lemma}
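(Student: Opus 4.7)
The plan is to represent $\eta^j = \mathcal{D}_{2}u(t_j) - \partial_t u(t_j)$ (or $\mathcal{D}_1 u(t_j) - \partial_t u(t_j)$ when $j=1$) as an integral remainder and then apply the regularity bound from Assumption \ref{assumpation}. For $j\geq 2$, I would first expand $u(t_{j-1})$ and $u(t_{j-2})$ about $t_j$ using Taylor's formula with the order-2 integral remainder, giving
\begin{equation*}
u(t_{j-\ell}) = u(t_j) + \sum_{m=1}^{2}\frac{(-1)^m\sigma_\ell^m}{m!}\partial_t^m u(t_j) + R_\ell^j,\quad \ell = 1,2,
\end{equation*}
where $\sigma_1 = \tau_j$, $\sigma_2 = \tau_j+\tau_{j-1}$, and $R_\ell^j = -\tfrac12\int_{t_{j-\ell}}^{t_j}(s-t_{j-\ell})^2 \partial_{ttt}u(s)\,ds$. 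Substituting into the definition of $\mathcal{D}_2 u(t_j)$ in \eqref{B} and using that BDF2 is exact on polynomials of degree $\leq 2$ (so the coefficients of $u(t_j)$, $\partial_t u(t_j)$, $\partial_{tt}u(t_j)$ collapse to $0,1,0$ respectively), I obtain the clean identity
\begin{equation*}
\eta^j = -\frac{1+r_j}{\tau_j}R_1^j + \frac{r_j^2}{\tau_j(1+r_j)}R_2^j.
\end{equation*}

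For the case $j\geq 3$, since $t_{j-2}>0$, I would split $R_2^j$ over $[t_{j-2},t_{j-1}]$ and $[t_{j-1},t_j]$, combining the latter piece with $-\tfrac{1+r_j}{\tau_j}R_1^j$ to exploit cancellation: the resulting kernel $(1+r_j)(s-t_{j-1})^2 - \tfrac{r_j^2}{1+r_j}(s-t_{j-2})^2$ vanishes at $s=t_j$ and is uniformly bounded by a multiple of $\tau_j^2$ on $[t_{j-1},t_j]$. Applying $\|\partial_{ttt}u(s)\|\leq \bar C s^{\alpha-3}$ together with the monotonicity of $s^{\alpha-3}$ (since $\alpha-3<0$) and integrating then produces $\|\eta^j_h\| \leq 2\bar{C}\tau_j^2 t_{j-1}^{\alpha-3} + \tfrac12 \bar C\tau_{j-1}^2 t_{j-2}^{\alpha-3}$, with ample margin in the numerical constants.

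The case $j=2$ is the main obstacle because the integral in $R_2^2$ extends to $t_0=0$, where $s^{\alpha-3}$ alone is not integrable. The key observation is that the factor $(s-t_0)^2=s^2$ regularizes the integrand to $s^{\alpha-1}$, which is integrable on $[0,t_2]$ since $\alpha>0$, yielding $\int_0^{t_2}s^{\alpha-1}ds = t_2^\alpha/\alpha$. Combining this with a monotonicity estimate of $R_1^2$ on $[t_1,t_2]$ (as in Step~2) and simplifying with $t_1=\tau_1$, $t_2=(1+r_2)\tau_1$, $\tau_2=r_2\tau_1$, and $(1+r_2)^\alpha\leq 1+r_2$ for $\alpha\leq 1$, one collects the $r_2$-dependent prefactors and arrives at the stated bound $\|\eta^2_h\|\leq(2r_2^2+1/(2\alpha))\bar C\tau_1^{\alpha-1}$.

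For $j=1$, I would instead work directly with $\eta^1 = \tau_1^{-1}\int_0^{t_1}\bigl(\partial_t u(s)-\partial_t u(t_1)\bigr)ds$. Writing $\partial_t u(s)-\partial_t u(t_1)=-\int_s^{t_1}\partial_{tt}u(r)dr$ and applying Fubini to exchange the order of integration reduces this to $\eta^1 = -\tau_1^{-1}\int_0^{t_1} s\,\partial_{tt}u(s)\,ds$. The regularity $\|\partial_{tt}u(s)\|\leq \bar C s^{\alpha-2}$ (Assumption \ref{assumpation} with $k=2$) then gives $\|\eta^1_h\|\leq \tau_1^{-1}\bar C\int_0^{t_1} s^{\alpha-1}ds = \bar C\tau_1^{\alpha-1}/\alpha$, precisely matching \eqref{EQ_2147}. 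The main hurdle throughout is bookkeeping in the $j=2$ case: tracking the interplay of the regularizing $s^2$ factor, the singular $\partial_{ttt}u$ behaviour, and the possibly large first ratio $r_2$ so that the final constants $2r_2^2$ and $1/(2\alpha)$ appear.
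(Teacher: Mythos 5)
Your overall route is the same as the paper's: write $\eta^j$ as a weighted sum of Taylor integral remainders, then apply $\|\partial_t^k u\|\leq \bar C t^{\alpha-k}$ with monotonicity of $t^{\alpha-3}$; your $j=1$ computation is verbatim the paper's, and your identity $\eta^j=-\tfrac{1+r_j}{\tau_j}R_1^j+\tfrac{r_j^2}{\tau_j(1+r_j)}R_2^j$ together with the $[t_{j-1},t_j]$-cancellation for $j\geq 3$ checks out (the combined kernel has maximum modulus $\tfrac{(1+r_j)\tau_j^2}{1+2r_j}\leq\tau_j^2$, which gives \eqref{EQ_2145} with room to spare).

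The gap is in your $j=2$ step, and it matters because large $r_2$ is exactly the regime this lemma exists for. You integrate the regularized integrand $s^{\alpha-1}$ over the \emph{whole} interval $[0,t_2]$ and estimate $R_1^2$ separately by monotonicity. Tracking your own prefactors: the $R_1^2$ term gives $\tfrac{1+r_2}{2\tau_2}\cdot\tfrac{\tau_2^3}{3}\bar C\tau_1^{\alpha-3}=\tfrac{(1+r_2)r_2^2}{6}\bar C\tau_1^{\alpha-1}$, and the $R_2^2$ term gives $\tfrac{r_2^2}{2\tau_2(1+r_2)}\cdot\tfrac{t_2^\alpha}{\alpha}\bar C=\tfrac{r_2(1+r_2)^{\alpha-1}}{2\alpha}\bar C\tau_1^{\alpha-1}$. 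The first exceeds $2r_2^2\bar C\tau_1^{\alpha-1}$ as soon as $r_2>11$, and the second exceeds $\tfrac{1}{2\alpha}\bar C\tau_1^{\alpha-1}$ already for $r_2\gtrsim 1.6$ when $\alpha=1/2$; so the claimed constant $2r_2^2+1/(2\alpha)$ does not follow (recall the graded mesh with $\gamma=4$ forces $r_2=2^4-1=15$). What you get this way is only a bound cubic in $r_2$. The fix is to treat $j=2$ exactly like $j\geq 3$: split $R_2^2$ at $t_1$, absorb its $[t_1,t_2]$ portion into the cancellation with $R_1^2$ (combined kernel bounded by $\tau_2^2$, contributing $\tfrac{r_2^2}{2}\bar C\tau_1^{\alpha-1}$), and reserve the $s^2$-regularization for the $[0,t_1]$ portion only, where $\int_0^{t_1}s^{\alpha-1}\,\mathrm{d}s=\tau_1^\alpha/\alpha$ and the prefactor $\tfrac{r_2^2}{2\tau_2(1+r_2)}=\tfrac{r_2}{2\tau_1(1+r_2)}$ yields $\tfrac{r_2}{2(1+r_2)\alpha}\bar C\tau_1^{\alpha-1}\leq\tfrac{1}{2\alpha}\bar C\tau_1^{\alpha-1}$. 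This is in substance what the paper's three-integral representation of $\eta^j_h$ does automatically, and it recovers \eqref{EQ_2146}.
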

\begin{proof}
By using the  Taylor expansion \cite{2019diffusion,2020reaction,Galerkin}, the truncation error $\eta^j_h$ may be expressed by
\begin{align*}
\eta^j_h = &-\frac{1+2r_j}{(2+2r_j)\tau_j}\int_{t_{j-1}}^{t_j}(t-t_{j-1})^2\partial_{ttt} u(t,x_h) \d t+\frac{r_j^2}{(2+2r_j)\tau_j}\int_{t_{j-2}}^{t_{j-1}}(t-t_{j-2})^2\partial_{ttt} u(t,x_h) \d t\nonumber\\
&+\frac{r_j}{(2+2r_j)}\int_{t_{j-1}}^{t_j}(2(t-t_{j-1})+\tau_{j-1})\partial_{ttt} u(t,x_h) \d t, \quad j\geq 2,\\
\eta^1_h = &- \frac{1}{\tau_1} \int_0^{t_1} t\partial_{tt}u(t,x_h)dt.
\end{align*}
 Noting the weak singularity of solutions in the assumption \ref{assumpation}, we have
 \begin{align}
 \|\eta^j_h\| \leq &\frac{1+2r_j}{(1+r_j)}\bar{C}\tau_j^2t_{j-1}^{\alpha-3}+\frac{r_j}{(2+2r_j)}\bar{C}\tau_{j-1}^2t_{j-2}^{\alpha-3}\leq 2\bar{C}\tau_j^2t_{j-1}^{\alpha-3}+\frac12\bar{C}\tau_{j-1}^2t_{j-2}^{\alpha-3},\quad j\geq 2, \\
 \|\eta^2_h\| \leq &\frac{1+2r_2}{(1+r_2)}\bar{C}\tau_2^2\tau_1^{\alpha-3}+\frac{r_2}{(2+2r_2)\tau_1}\bar{C}\int_{0}^{t_1}t^{\alpha-1}\d t\leq ( 2r_2^2+1/(2\alpha))\bar{C}\tau_{1}^{\alpha-1},\\
 \|\eta^1_h\| \leq  & \frac{\bar{C}}{\tau_1}\int_0^{t_1}t\partial_{tt}u \d t\leq \frac{\bar{C}}{\alpha}\tau_1^{\alpha-1}.
 \end{align}
 The proof is completed.
\end{proof}

In the remainder of this paper, any subscripted $C$ and $C_u$, denotes
 positive constants, not necessarily the same at different occurrences, which is
always dependent on the given data and the solution, but independent of the ratio $r_2$ and mesh sizes $\tau$ and $h$.

     \begin{theorem}\label{errorestimate1}
       Let $u(t,x)$ be the solution to problem \eqref{continuous} and assume \Ass{1}  holds, then the solution $u^n_h$ to BDF2 scheme \eqref{IMEXsp} is convergent in the $L^2$-norm.
       This is, if the maximum time-step sizes satisfy \eqref{EQ_2055},
it holds  for $\ 3 \leq n \leq N$ that
\begin{align}
      \|e^{n}_h\|&\leq C_u\exp(2C_1t_{n-1})\Big((1+r_2)\|e^0_h\|+C_2\sqrt{\tau}|e^0_h|_1 +\sum_{k=3}^n p^{(n)}_{n-k}(\tau_k^2t_{k-1}^{\alpha-3}+\tau_{k-1}^2t_{k-2}^{\alpha-3}) \nonumber\\
      &\qquad \qquad \qquad \qquad +(1+r_2)^2(p^{(n)}_{n-2}+p^{(n)}_{n-1})+h^2\Big),\label{EQ_1627}\\
      &\leq C_u\exp(2C_1t_{n-1})\Big((1+r_2)\|e^0_h\| +C_2\sqrt{\tau}|e^0_h|_1+\delta^{-1}\sum_{k=3}^n \tau^\alpha(\tau_k^{3-\alpha} t_{k-1}^{\alpha-3}+\tau_{k-1}^{3-\alpha}t_{k-2}^{\alpha-3}) \nonumber\\
      &\qquad \qquad \qquad\qquad+\delta^{-1}(1+r_2)^2(r_2^{1-\alpha}+1)\tau^\alpha+h^2\Big),\label{EQ_1640}\\
      \|e^{2}_h\|&\leq C_u\exp(2C_1t_1)\left((1+r_2)\|e^0_h\| + \tau_1^{\alpha}+t_2h^2+C_2\sqrt{\tau}|e^0_h|_1\right),\\
      \|e^1_h\|&\leq C_u\Big( \|e^0_h\| +\tau_1^\alpha+t_1 h^2\Big),
     \end{align}
where $C_u$  is a constant independent of ratio $r_2$ and mesh sizes $\tau$ and $h$.

   \end{theorem}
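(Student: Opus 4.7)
The plan is to view the error equation, obtained by subtracting \eqref{IMEXsp} from the continuous problem \eqref{continuous} evaluated at the spatial grid, as an instance of the perturbation equation \eqref{EQ_Pequation} with vanishing initial perturbation and composite source $\zeta^n_h := \xi^n_h + \eta^n_h + \nu^n_h$. The stability estimate of Theorem \ref{prioritheorem} then reduces the convergence analysis to a quantitative control of the convolution sum $\sum_{k=1}^n p^{(n)}_{n-k}\|\zeta^k_h\|$ together with the boundary contribution $p^{(n)}_{n-2}C_J(\tau_1+\tau_2)\|\zeta^1_h\|$. The case $n=1$ is then immediate from \eqref{EQ_1441} combined with $\|\zeta^1_h\|\leq C(\tau_1^{\alpha-1}+h^2)$, which is produced by Lemmas \ref{tilded} and \ref{truncation}.

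For $n\geq 2$ I would apply \eqref{EQ_stability} and split the convolution sum into the tail $k\geq 3$ and the low-index part $k=1,2$. For the tail, Lemmas \ref{tilded} and \ref{truncation} give $\|\zeta^k_h\|\leq C(\tau_k^2 t_{k-1}^{\alpha-3}+\tau_{k-1}^2 t_{k-2}^{\alpha-3}+h^2)$, where the subdominant $\tau_k^2 t_{k-1}^{\alpha-2}$ piece from $\xi^k_h$ is absorbed into the $t^{\alpha-3}$ term via $t_{k-1}\leq T$. The spatial $h^2$-contribution is bounded by $Ch^2 t_n$ through the identity $\sum_j p^{(n)}_{n-j}=t_n$ of Proposition \ref{pttau}. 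For the low-index part I would substitute $\|\zeta^1_h\|\leq C(\tau_1^{\alpha-1}+h^2)$ and $\|\zeta^2_h\|\leq C((1+r_2)^2\tau_1^{\alpha-1}+h^2)$, combine with the sharp DCC estimates $p^{(n)}_{n-1}\leq \tau_1+c_r\delta^{-1}\sqrt{\tau_2\tau}$ and $p^{(n)}_{n-2}\leq c_r\delta^{-1}\sqrt{\tau_2\tau}$ from Proposition \ref{Pro_p}, and package the resulting terms as a constant multiple of $(1+r_2)^2(p^{(n)}_{n-2}+p^{(n)}_{n-1})$, which delivers \eqref{EQ_1627}.

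The refinement \eqref{EQ_1640} is then obtained by a second application of Proposition \ref{Pro_p}. For the tail, $p^{(n)}_{n-k}\leq c_r\delta^{-1}\sqrt{\tau_k\tau}$ together with the elementary inequality $\tau_j^{1/2}\tau^{1/2}\leq \tau^\alpha\tau_j^{1-\alpha}$ --- which holds whenever $\tau_j\leq\tau$ and $\alpha\geq 1/2$, applied with $j=k$ directly and with $j=k-1$ after using $\tau_k\leq r_{\max}\tau_{k-1}$ from \Ass{1} --- converts $p^{(n)}_{n-k}\tau_k^2 t_{k-1}^{\alpha-3}$ into $C\delta^{-1}\tau^\alpha\tau_k^{3-\alpha}t_{k-1}^{\alpha-3}$, and the analogous statement holds for the neighboring $\tau_{k-1}$ term. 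For the low-index piece, the further bound $p^{(n)}_{n-2}+p^{(n)}_{n-1}\leq \tau_1+2c_r\delta^{-1}\sqrt{\tau_2\tau}$, combined with $\tau_2\leq\tau$, $\tau_1\leq T^{1-\alpha}\tau^\alpha$, and the rescaling $\sqrt{\tau_2\tau}=\sqrt{r_2}\sqrt{\tau_1\tau}$, produces the $\delta^{-1}(r_2^{1-\alpha}+1)\tau^\alpha$ factor in the statement. The $n=2$ bound then follows by specializing these estimates with $t_1=\tau_1$ and $t_2=\tau_1+\tau_2$.

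The main obstacle is the handling of the low-index contribution: the factor $\tau_1^{\alpha-1}$ in $\|\zeta^1_h\|$ and $\|\zeta^2_h\|$ is genuinely singular as $\tau_1\to 0$, so no $\tau$-independent bound on these terms alone is possible. The remedy lies precisely in the sharp DCC estimate of Proposition \ref{Pro_p}: its $\sqrt{\tau_k}$ scaling combines with $\tau_1^{\alpha-1}$ to yield $\tau_1^{\alpha-1/2}$, which under the regularity hypothesis $\alpha\geq 1/2$ of Assumption \ref{assumpation} is bounded above by $\tau^{\alpha-1/2}$; the companion $\sqrt{\tau}$ factor then delivers the desired $\tau^\alpha$ convergence rate. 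The explicit polynomial dependence on the first-step ratio $r_2$ in the final estimate arises from faithfully tracking the factor $\sqrt{r_2}=\sqrt{\tau_2/\tau_1}$ that appears when rewriting $\sqrt{\tau_2\tau}$.
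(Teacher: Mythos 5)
Your proposal is correct and follows the paper's own route: the error equation is fed into the stability estimate of Theorem \ref{prioritheorem} with $\zeta^k_h=\xi^k_h+\eta^k_h+\nu^k_h$, the consistency bounds of Lemmas \ref{tilded} and \ref{truncation} are split into the tail $k\geq 3$ and the low-index terms $k=1,2$, and the sharp DCC bound of Proposition \ref{Pro_p} absorbs the singular factor $\tau_1^{\alpha-1}$ exactly as you describe. The one small mismatch is in extracting the ratio factor: your rescaling $\sqrt{\tau_2\tau}=\sqrt{r_2}\sqrt{\tau_1\tau}$ combined with $\tau_1^{\alpha-1}\leq\tau_1^{\alpha-1}$ yields $\sqrt{r_2}\,\tau^{\alpha}$ rather than the stated $r_2^{1-\alpha}\tau^{\alpha}$ (which is weaker for $\alpha>1/2$ and large $r_2$); the paper instead writes $\tau_1^{\alpha-1}=r_2^{1-\alpha}\tau_2^{\alpha-1}$ and then uses $\tau_2^{\alpha-1/2}\sqrt{\tau}\leq\tau^{\alpha}$, which produces the exponent $1-\alpha$ directly.
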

     \begin{proof}
For $n\geq2$, it follows from \eqref{EQ_1441} and \eqref{EQ_1442} that
 \begin{align}
\|e^{n}_h\|&\leq 2\exp(2C_1t_{n-1})\Big((3+r_2)\|e^0_h\| +C_2\sqrt{\tau}|e^0_h|_1+ 14\sum_{k=1}^np^{(n)}_{n-k} \|\xi^k_h+\eta^k_h+\nu^k_h\|\nonumber\\
     &\qquad \qquad\qquad\qquad+6p^{(n)}_{n-2}C_J(\tau_1+\tau_2)\|\xi^1_h+\eta^1_h+\nu^1_h\|\Big),\label{EQ_06091439}\\
         \|e^1_h\| &\leq 2 \|e^0_h\| + 3 \tau_1 \|\xi^1_h+\eta^1_h+\nu^1_h\|.\label{e1}
       \end{align}
For the first-level error $e^1_h$, inserting \eqref{EQ_2124}  \eqref{EQ_2147} into  \eqref{e1}, one has
\begin{align}
\|e^1_h\|\leq 2 \|e^0_h\| + 3\bar{C} (C_J\tau_1+1)\tau_1^\alpha/\alpha+3C_s\tau_1 h^2\leq C_u( \|e^0_h\|+\tau_1^\alpha+t_1h^2).
\end{align}
For $n=2$, it follows from  Lemmas \ref{tilded} and \ref{truncation} and Proposition \ref{pttau} that
\begin{align*}
& \quad 7\sum_{k=1}^2 p^{(2)}_{2-k}\|\xi^k_h +\eta^k_h+\nu^k_h\|+6C_Jp^{(2)}_{0}(\tau_1+\tau_2)\|\xi^1_h+\eta^1_h+\nu^1_h\| \\
&\leq C_u\Big((1+r_2)^3\tau_1^{\alpha} +t_2h^2\Big),
\end{align*}
Combining with \eqref{EQ_06091439}, one has
\begin{align}
\|e^{2}_h\|&\leq C_u\exp(2C_1t_1)\left((1+r_2)\|e^0_h\| + \tau_1^{\alpha}+t_2h^2+C_2\sqrt{\tau}|e^0_h|_1\right).
\end{align}
For $n\geq 3$,  from Lemmas \ref{tilded} and \ref{truncation}, one has
 \begin{align}
 &\sum_{k=1}^n p^{(n)}_{n-k}\|\eta^k_h\| \leq 2\bar{C}\Big(\sum_{k=3}^n p^{(n)}_{n-k}(\tau_k^2t_{k-1}^{\alpha-3}+\tau_{k-1}^2t_{k-2}^{\alpha-3})
 +(p^{(n)}_{n-2}( r_2^2+1)+p^{(n)}_{n-1})\tau_1^{\alpha-1}\Big),\label{EQ_1443}\\
 &\sum_{k=1}^n p^{(n)}_{n-k}\|\xi^k_h\| \leq C_J \bar{C}\Big(\sum_{k=3}^n p^{(n)}_{n-k}(\tau_k^2t_{k-1}^{\alpha-2}+\tau_k\tau_{k-1}t_{k-2}^{\alpha-2})+(p^{(n)}_{n-2}r_2(r_2+2)+2p^{(n)}_{n-1})\tau_1^\alpha\Big),\label{EQ_1521}\\
&p^{(n)}_{n-2}(\tau_1+\tau_2)\|\xi^1_h+\eta^1_h\| \leq 2\bar{C}(C_J\tau_1+1)(\tau_1+\tau_2)p^{(n)}_{n-2}\tau_1^{\alpha-1},
 \end{align}
 where $\alpha\geq 1/2$ is used. Hence, \eqref{EQ_1627} holds by inserting \eqref{EQ_1443} and \eqref{EQ_1521} into \eqref{EQ_06091439} and using the condition $\tau\leq 1$.
From Proposition \ref{Pro_p}, we find
\begin{align}
(p^{(n)}_{n-2}+p^{(n)}_{n-1})\tau_1^{\alpha-1} \leq c_r\delta^{-1} (2r_2^{1-\alpha}+1)\tau^\alpha,
\end{align}
which yields \eqref{EQ_1640}.
The proof is completed.  \end{proof}

\begin{remark}
Theorem \ref{errorestimate1} indicates  that the error increases polynomially with respect to the first ratio $r_2$, which is consistent with the numerical experiments in \cite{N19}.
\end{remark}

\section{Graded mesh for time grid\label{Section4}}
An important feature of the considered problem has the weak regularity of the solution arose from the nonsmooth initial data. A fundamental flaw of  numerical scheme with uniform time step will bring the loss of accuracy.  Theorem \ref{errorestimate1} indicates that taking smaller time steps near the initial time $t=0$ may be an effective approach to improve the global accuracy. In this section, we consider the
graded time mesh $t_k = T( k/N )^\gamma$ to achieve the second-order convergence, where  the grading parameter $\gamma > 1$. We claim that the graded time mesh is consistent with the ratio restriction \Ass{1}. Actually, it is easy to verify   the time-step ratio $r_k = \frac{ \tau_k }{ \tau_{k-1} } = \frac{ k^\gamma - ( k-1 )^\gamma }{ ( k-1 )^\gamma - ( k-2 )^\gamma }$ for $2 \leq k \leq N$. Note that the function
$$h(x) = \frac{ x^\gamma - ( x-1 )^\gamma }{ ( x-1 )^\gamma - ( x-2 )^\gamma }$$
 is monotone decreasing. One can verify $r_3\leq r_{\max}\approx4.8645$ for $\gamma \leq 4.2529$, which implies the ration restriction \Ass{2} holds for the graded time mesh.

\begin{theorem}\label{Theorem_Graded}
Assume the conditions in Theorem \ref{errorestimate1} hold and take the graded mesh $t_k = T( k/N )^\gamma$. Then the global error $e^n$ has the following convergence rate:
   \begin{equation}\label{EQ_Graded}
   	 \|e^n\|\leq
   	 \begin{cases}
   	   CN^{- \alpha\gamma},\ &\mbox{ $\gamma < 2/\alpha$ }\\
   	   CN^{-2} \log{N},\ &\mbox{ $\gamma = 2/\alpha$ }\\
   	   CN^{-2},\ &\mbox{ $\gamma > 2/\alpha$ },
     \end{cases}
   \end{equation}
   where $C$ is a constant independent with $N$.
\end{theorem}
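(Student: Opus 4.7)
The plan is to plug the graded mesh $t_k=T(k/N)^\gamma$ into the convergence estimate \eqref{EQ_1627} of Theorem \ref{errorestimate1}. Since the initial datum is exact, $e^0_h=0$, killing the two initial-data terms in \eqref{EQ_1627}, and \Ass{1} has already been checked for this mesh in the paragraph preceding the theorem. What remains is to estimate the time-consistency sum $\sum_{k=3}^n p_{n-k}^{(n)}\bigl(\tau_k^2 t_{k-1}^{\alpha-3}+\tau_{k-1}^2 t_{k-2}^{\alpha-3}\bigr)$, the first-step piece $(1+r_2)^2(p_{n-2}^{(n)}+p_{n-1}^{(n)})$, and the $O(h^2)$ spatial term, each by $N^{-\min\{2,\gamma\alpha\}}$ (up to a logarithm in the borderline case).

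The technical heart of the argument, and what I expect to be the main obstacle, is to sharpen Proposition \ref{Pro_p}'s generic estimate $p_{n-k}^{(n)}\leq c_r\delta^{-1}\sqrt{\tau_k\tau}$ into the mesh-adapted bound
\[
p_{n-j}^{(n)}\leq C_\gamma\,\tau_j,\qquad 2\leq j\leq n,
\]
with $C_\gamma$ independent of $N$. Using Proposition \ref{pttau} together with $\tau_k=\tau_j\prod_{i=j+1}^k r_i$, one rewrites $p_{n-j}^{(n)}=\tfrac{(1+r_j)\tau_j}{1+2r_j}\sum_{k=j}^n\prod_{i=j+1}^k q_i$ with $q_i:=r_i^2/(1+2r_i)$. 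On the graded mesh $r_i$ is monotone decreasing to $1$, so $q_i\to 1/3$ and $q_i\leq 1/2$ once $i$ exceeds a $\gamma$-dependent threshold; the finitely many early values of $q_i$ (which may exceed $1$, e.g.\ $q_2=r_2^2/(1+2r_2)$ with $r_2=2^\gamma-1$) contribute only a bounded factor into $C_\gamma$, and the tail sums to a convergent geometric series. This step is delicate precisely because the generic DCC bound $\sqrt{\tau_k\tau}$ is far too crude for the graded mesh: in the borderline case $\alpha=1/2,\gamma=4$ it only yields $O(N^{-1/2})$, rather than the desired $O(N^{-2}\log N)$.

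Armed with $p_{n-k}^{(n)}\leq C_\gamma\tau_k$ and the graded-mesh scalings $\tau_k\leq \gamma T N^{-\gamma}k^{\gamma-1}$ and $t_{k-1}\geq c\,T(k/N)^\gamma$ for $k\geq 2$, I obtain
\[
p_{n-k}^{(n)}\,\tau_k^2 t_{k-1}^{\alpha-3}\leq C\,\tau_k^3 t_{k-1}^{\alpha-3}\leq C\,N^{-\gamma\alpha}\,k^{\gamma\alpha-3}.
\]
Comparison of $\sum_{k=3}^n k^{\gamma\alpha-3}$ with $\int_{1}^{N} x^{\gamma\alpha-3}\,dx$ gives $O(1)$, $O(\log N)$, or $O(N^{\gamma\alpha-2})$ when $\gamma\alpha<2$, $=2$, or $>2$ respectively, and multiplying by $N^{-\gamma\alpha}$ produces the three-branch rate $N^{-\gamma\alpha}$, $N^{-2}\log N$, $N^{-2}$. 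The companion sum involving $\tau_{k-1}^2 t_{k-2}^{\alpha-3}$ is handled identically, since $r_k\to 1$ implies $\tau_{k-1}\sim\tau_k$ and $t_{k-2}\sim t_{k-1}$.

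It remains to show that the first-step and spatial contributions are harmless. Since $(1+r_2)^2=2^{2\gamma}$ is $N$-independent, and $p_{n-2}^{(n)}+p_{n-1}^{(n)}\leq C_\gamma\tau_2 \leq C'N^{-\gamma}$ (using $\tau_1\leq\tau_2$ and $p_{n-1}^{(n)}=\tau_1+\tfrac{r_2}{1+2r_2}p_{n-2}^{(n)}$), the bound $\alpha\leq 1$ yields $N^{-\gamma}\leq N^{-\gamma\alpha}$ and also $N^{-\gamma}\leq N^{-2}$ whenever $\gamma\geq 2/\alpha$; hence this piece is dominated by the target rate. Combining with the $O(h^2)$ spatial term and the harmless prefactor $\exp(2C_1 t_{n-1})\leq\exp(2C_1 T)$ delivers \eqref{EQ_Graded}.
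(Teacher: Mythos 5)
Your proposal is correct and follows essentially the same route as the paper: both start from the estimate of Theorem \ref{errorestimate1}, both hinge on sharpening the DCC bound to $p_{n-j}^{(n)}\leq C_\gamma\tau_j$ by exploiting that $r_i$ decreases to $1$ on the graded mesh so that $r_i^2/(1+2r_i)<1$ beyond a $\gamma$-dependent (hence $N$-independent) threshold, and both conclude via $\tau_k^3 t_{k-1}^{\alpha-3}\leq CN^{-\gamma\alpha}k^{\gamma\alpha-3}$ and the three-way integral comparison of $\sum_k k^{\gamma\alpha-3}$. The only cosmetic difference is your slightly more explicit bookkeeping of the first-step term $(1+r_2)^2(p_{n-2}^{(n)}+p_{n-1}^{(n)})$, which the paper absorbs into $\tau_1^\alpha$.
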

\begin{proof}
For the case of $n\leq 2$, it is easy to verify that \eqref{EQ_Graded} holds.  We only prove the case of $n\geq 3$.

It follows from Theorem \ref{errorestimate1} that
\begin{align}
\|e^n\|\leq C\big(\sum_{k=3}^n p^{(n)}_{n-k}(\tau_k^2t_{k-1}^{\alpha-3}+\tau_{k-1}^2t_{k-2}^{\alpha-3}) +(p^{(n)}_{n-2}+p^{(n)}_{n-1})\tau_1^{\alpha-1}+h^2\big).
\end{align}
Due to the decreasing property of function $h(x)$,  there exists  a  positive integer $N_0(\gamma)$ such that $r_k < 1 + \sqrt{2}$ when $k \geq N_0(\gamma)$ (For example, $N_0(\gamma) = 5$ for $\gamma = 4$). Combining with proposition \ref{pttau},  i.e., the DCC kernels satisfy
\begin{equation}
   	 p_{n-j}^{(n)} = \sum_{k=j}^{n} \frac {\tau_{k}(1+r_{j})}{1+2r_{j}} \prod_{i=j+1}^{k} \frac {r_{i}}{1+2r_{i}} \leq \tau_{j} \sum_{k=j}^n \prod_{i=j+1}^k \frac{ r_i^2 }{ 1 + 2 r_i },
   \end{equation}
which implies that there exists a constant $C$ such that $p_{n-j}^{(n)}\leq C\tau_j$. Hence, we have
\begin{align}
\|e^n\|\leq C\big(\sum_{k=3}^n (\tau_k^3t_{k-1}^{\alpha-3}+\tau_{k-1}^3t_{k-2}^{\alpha-3}) +\tau_1^{\alpha}+h^2\big).\label{EQ_1610}
\end{align}
It is easy to verify $\tau_k \leq T \gamma N^{-1} {(k/N)}^{\gamma - 1} $ for $1 \leq k \leq N$,  then we have
\begin{align}
\sum_{k=3}^n \tau_k^3t_{k-1}^{\alpha-3} \leq T^\alpha \gamma^3\sum_{k=3}^n \frac{k^{\alpha\gamma-3}}{N^{\alpha\gamma}}\label{EQ_1550}.
\end{align}
For $\alpha \gamma <2$, obviously $\sum_{k=3}^n k^{\alpha\gamma-3}$ is summable.  Hence,  \eqref{EQ_1550} reduces to
\begin{align}
\sum_{k=3}^n \tau_k^3t_{k-1}^{\alpha-3} \leq CN^{-\alpha\gamma}\label{EQ_<2}.
\end{align}
For $\alpha \gamma = 2$,  it is easy to verify $\sum_{k=3}^nk^{-1}\leq C \ln N$, then we have
\begin{align}
\sum_{k=3}^n \tau_k^3t_{k-1}^{\alpha-3} \leq CN^{-2}\ln N\label{EQ_=2}.
\end{align}
For $\alpha \gamma > 2$, note that $\sum_{k=3}^n \frac1{N}(\frac{k}{N})^{\alpha\gamma-3}\leq \int_0^1 x^{\alpha\gamma-3} \d x\leq \frac{1}{\alpha\gamma-2}$, we arrive at
\begin{align}
\sum_{k=3}^n \tau_k^3t_{k-1}^{\alpha-3} \leq CN^{-2}\label{EQ_>2}.
\end{align}
The proof is completed by inserting \eqref{EQ_<2}, \eqref{EQ_=2} and \eqref{EQ_>2} into \eqref{EQ_1610}.
\end{proof}

\section{Numerical experiments\label{Section5}}
Two numerical examples are reported here to demonstrate our theory.
\begin{example}[Construct an exact solution for abstract PIDE]\end{example}
The IMEX BDF2 scheme \eqref{IMEXsp} runs for solving the problem \eqref{continuous} in the spatial domain $(0,\pi)$ and time interval $(0,1]$. We take $c_1=c_2=c_3=1$, $u_b(x,t)=0$ and $\rho(x)\equiv 1$. The source term $f$ is chosen such that the continuous problem \eqref{continuous} has a exact    solution $u=(1+t^\alpha)\sin(x)$ with some parameters $0.5\leq \alpha\leq 1$.

In our simulations, we uniformly divide the spatial domain $\Omega$  into $M$ subintervals  and the time interval $[0, 1]$  by a graded mesh with $N$ points. Since the spatial error $\mathcal{O}(h^2)$ is standard, we only investigate the temporal error. In each run, fixing $M=8192$, then the discrete $L^2$-error is recorded as $e(N)= \|u(t_n,\cdot)-u^n_h\|$ and the temporal convergence rate (listed as ``Order'' in the tables) is calculated by $\text{Order}=\log_2(e(N)/e(2N))$.

To test the sharpness of our error estimate \eqref{EQ_Graded}, we list the $L^2$-error and convergence order  in Table \ref{table1}, in which  different parameters $\alpha, \gamma$ are chosen. As shown in Table \ref{table1},  the numerical results   support  the predicted time accuracy in Theorem \ref{Theorem_Graded} on the smoothly graded mesh $t_k=T(k/N)^\gamma$. The
nonuniform meshes evidently improve the numerical precision and convergence order of solution. More specifically, one can observe an accuracy of $\mathcal{O}(N^{-\alpha\gamma})$ for $\gamma<2/\alpha$, and also observe the accuracy of $\mathcal{O}(N^{-2})$ for $\gamma>2/\alpha$. Results in Table \ref{table1} suggest that $\gamma=2/\alpha$ is the optimal choice to have the optimal convergence order, which agrees with our theoretical error estimate \eqref{EQ_Graded}.
\begin{table}[!ht]
\begin{center}
\caption {  Errors and convergence orders in temporal direction  } \vspace*{0.5pt}\label{table1}
\def\temptablewidth{1.0\textwidth}
{\rule{\temptablewidth}{1pt}}
\begin{tabular*}{\temptablewidth}{@{\extracolsep{\fill}}cccccccccccc}
$\alpha$ &$N$ &\multicolumn{2}{c}{$\gamma=1$}
&\multicolumn{2}{c}{$\gamma=2$} &\multicolumn{2}{c}{$\gamma=3$}&\multicolumn{2}{c}{$\gamma=4$}\\
\cline{3-4}    \cline{5-6} \cline{7-8}\cline{9-10}
&& $e(N)$ & Order &$e(N)$ & Order &$e(N)$&Order&$e(N)$&Order\\
\hline
&$2^9$  &3.4147e-01 &--&1.1743e-02 &-- &1.1656e-02 &-- &5.0170e-03 &--\\
&$2^{10}$ &2.4085e-01 &0.50 &5.8713e-03 &1.00&4.1598e-03 &1.49 &1.3395e-03 &1.91\\
0.5&$2^{11}$ &1.7010e-01 &0.50 &2.9354e-03 &1.00 &1.4802e-03 &1.49 &3.5599e-04 &1.91\\
&$2^{12}$ &1.2020e-01 &0.50 &1.4676e-03 &1.00 &5.2559e-04 &1.49 &9.4177e-05 &1.92\\
&$2^{13}$  &8.4971e-02 &0.50 &7.3371e-04 &1.00 &1.8632e-04 &1.50 &2.4748e-05 &1.93 \\
\hline
&$\gamma\alpha$&&0.50&&1.00&&1.50&&2.00 \\
\hline
&$2^{9}$ &3.5558e-02 &-- &1.2513e-03 &-- &4.7820e-04 &-- &3.4767e-04 &--\\
&$2^{10}$ &2.0958e-02 &0.76 &4.4735e-04 &1.48 &1.2208e-04 &1.97 &8.5749e-05 &2.02\\
0.75&$2^{11}$ &1.2408e-02 &0.76 &1.5931e-04 &1.49 &3.0952e-05 &1.98 &2.1178e-05 &2.02\\
&$2^{12}$ &7.3626e-03 &0.75 &5.6542e-05 &1.49 &7.7402e-06 &2.00 &5.1632e-06 &2.04\\
&$2^{13}$ &4.3734e-03 &0.75 &1.9973e-05 &1.50 &1.8480e-06 &2.07 &1.1774e-06 &2.13\\
\hline
&$\gamma\alpha$&&0.75&&1.50&&2.00 && 2.00\\
\hline
&$2^{9}$ &5.7505e-03 &--&1.8696e-04 &- &9.1360e-05 &-- &9.6765e-05 &--\\
&$2^{10}$  &2.9891e-03 &0.94 &5.6189e-05 &1.73 &2.2599e-05 &2.02 &2.4009e-05 &2.01\\
0.9&$2^{11}$  &1.5776e-03 &0.92 &1.6683e-05 &1.75 &5.5193e-06 &2.03 &5.8858e-06 &2.03\\
&$2^{12}$  &8.3909e-04 &0.91 &4.8511e-06 &1.78 &1.2691e-06 &2.12 &1.3635e-06 &2.11\\
&$2^{13}$  &4.4805e-04 &0.91 &1.3324e-06 &1.86 &2.1032e-07 &2.59 &2.3444e-07 &2.54 \\
\hline
&$\gamma\alpha$&&0.90&&1.80&&2.00 && 2.00\\
\end{tabular*}
{\rule{\temptablewidth}{1pt}}
\end{center}
\end{table}

\begin{example}[European call option under Merton's model]\end{example}
To simulate a European call option under Merton's model , we only consider the case that $\mu_M$, $\sigma_M$, $\sigma$ and $\lambda$ are constants for simplicity and investigate the convergence orders of IMEX BDF2 scheme with different time grids. By Merton \cite{M76},  the first six terms can obtain the accurate price and the reference values for European call option are 0.52763802 at $S= 90$, 4.39124569 at $S= 100$, and 12.64340583 at $S= 110$.

By choosing the parameters $\sigma = 0.15, r_I = 0.05, \mu_M = -0.9, \sigma_M = 0.45, \lambda = 0.1, c_1 = \sigma^2/2, c_2 = -( r_I - \sigma^2/2 - \lambda\kappa), c_3 = r_I + \lambda, T = 0.25, K = 100, x_l = -1.5, x_r = 1.5$, we demonstrate the errors  at the reference points and convergence orders of IMEX BDF2 scheme with different time grids in Table \ref{call_T}.  Table \ref{call_T} shows that the IMEX BDF2  scheme is convergent with quadratic rate, which is consistent with our theoretical analysis.


\begin{table}[h]
	\centering
	\caption{The errors and convergence orders on graded time mesh with $\alpha=1/2$ and $\gamma=4$ for European call option (Merton's)}
	\begin{tabular}{cccccccc}
		\toprule
		& &S=90& &S=100& &S=110&\\
		$M$ & $N$ & Error & Order & Error & Order & Error & Order\\
		\midrule
		256&256&4.2388e-04& -- &9.0000e-03&-- &2.0000e-03&--\\
		512&512&1.1181e-04&1.92&2.2000e-03&2.01&5.0890e-04&1.99\\
		1024&1024&2.8316e-05&1.98&5.5722e-04&2.00&1.2743e-04&2.00\\
		2048&2048&7.1017e-06&2.00&1.3927e-04&2.00&3.1868e-05&2.00\\
		\bottomrule
	\end{tabular} \label{call_T}
\end{table}

\section{Conclusion}
The  partial integrl-differential equations (PIDEs), which  arises from option pricing theory when the underlying asset follows a jump diffusion process, may suffer from the weak regularity near $t=0$ due to nonsmoothness of the initial value. In this paper, we revisit an implicit-explicit (IMEX) BDF2 \eqref{IMEXsp} with variable time steps for solving the PIDEs with an initial singularity. To handle the initial singularity, nonuniform time steps like graded mesh are an efficient choice.
If the graded mesh is used to deal with the initial singularity \eqref{time_regularity},  one needs to   $r_2=2^{\gamma}-1>15$ when taking $\alpha=0.5$, which significantly breaks the adjacent time-step ratio restriction $r_k <r_{\max} (\approx 4.86)$ in \cite{2019On,2019diffusion,2020reaction}.

To fill the gap,  several novel properties based on the DOC and DCC kernels are developed, including a strictly positive definiteness of the BDF2 kernels $b^{(n)}_{n-k}$  (see Lemma \ref{thpositive}) and a sharper estimate of  DCC kernels $p^{(n)}_{n-k}$   (see Proposition \ref{pttau}) for handling the initial singularity, and a novel convolution-type Young's inequality (see Lemma \ref{Lemma_Dn}) for dealing with the difficulty arising from spatial discretization. We have presented that the IMEX-BDF2 scheme is unconditionally stable and achieves a $\alpha$-order temporal convergence under mild assumptions on the ratio of adjacent time steps \Ass{1}, where $\alpha$ is the regularity of the exact solution (see \ref{time_regularity}). Compared with the previous ratio restrictions in \cite{2019On,2019diffusion,2020reaction}, our ratio restriction has no restriction on the ratio $r_2$. Thus,  an optimal convergence of order $\mathcal{O}(N^{-2})$ is achieved under the graded mesh  $t_k=T(k/N)^{\gamma_{\text{opt}}}$ with an optimal parameter $\gamma_{\text{opt}}=2/\alpha$.

As far as we know, this is the first rigorous proof of second-order convergence for an IMEX-BDF2 scheme for initial singularity problems without severe restrictions on the ratio of adjacent time-steps especially no any restriction on the first ratio $r_2$.

\bibliographystyle{unsrt}
  \bibliography{citation-349457013}

\begin{thebibliography}{10}

\bibitem{AO05}
A.~Almendral and C.W. Oosterlee.
\newblock Numerical valuation of options with jumps in the underlying.
\newblock {\em Appl. Numer. Math.}, 53:1--18, 2005.

\bibitem{AA00}
L.~Andersen and J.~Andreasen.
\newblock Jump-diffusion processes: Volatility smile fitting and numerical
  methods for option pricing.
\newblock {\em Rev. Deriv. Res.}, 4(3):231--262, 2000.

\bibitem{B98}
J.~Becker.
\newblock A second order backward difference method with variable steps for a
  parabolic problem.
\newblock {\em BIT}, 38:644--662, 1998.

\bibitem{BNR07}
M.~Briani, R.~Natalini, and G.~Russo.
\newblock Implicit-explicit numerical schemes for jump-diffusion processes.
\newblock {\em Calcolo}, 44(1):33--57, 2007.

\bibitem{HB85}
H.~Brunner.
\newblock The numerical solution of weakly singular {Volterra} integral
  equations by collocation on graded meshes.
\newblock {\em Math. Comp.}, 45:417--437, 1985.

\bibitem{C02}
P.~Carr, H.~Geman, D.~Madan, and M.~Yor.
\newblock The fine structure of asset returns: an empirical investigation.
\newblock {\em J. Bus.}, 75:{}305--332, 2002.

\bibitem{unbounded}
W.~Chen, X.~Wang, Y.~Yan, and Z.~Zhang.
\newblock A second order {BDF} numerical scheme with variable steps for the
  cahn--hilliard equation.
\newblock {\em SIAM J. Numer. Anal.}, 57(1):495--525, 2019.

\bibitem{C04}
R.~Cont and P.~Tankov.
\newblock {\em Financial Modelling with Jump Processes}.
\newblock Chapman $\&$ Hall/CRC, Boca Raton, 2004.

\bibitem{HFV05}
Y.~d$'$Halluin, P.~A. Forsyth, and K.~R. Vetzal.
\newblock Robust numerical methods for contigent claims under jump diffusion
  processes.
\newblock {\em IMA J. Numer. Anal.}, 25(1):87--112, 2005.

\bibitem{E05}
E.~Emmrich.
\newblock Stability and error of the variable two-step {BDF} for semilinear
  parabolic problems.
\newblock {\em J. Appl. Math. Comput.}, 19:33--55, 2005.

\bibitem{GG82}
I.~G. Graham.
\newblock Galerkin methods for second kind integral equations with
  singularities.
\newblock {\em Math. Comp.}, 39:519--533, 1982.

\bibitem{KTK15}
M.~K. Kadalbajoo, L.~P. Tripathi, and A.~Kumar.
\newblock Second order accurate {IMEX} methods for option pricing under merton
  and kou jump-diffusion models.
\newblock {\em J. Sci. Comput.}, 65:979--1024, 2015.

\bibitem{KL11}
Y.~Kwon and Y.~Lee.
\newblock A second-order finite difference method for option pricing under
  jump-diffusion models.
\newblock {\em SIAM J. Numer. Anal.}, 49:2598--617, 2011.

\bibitem{KL11b}
Y.~Kwon and Y.~Lee.
\newblock A second-order tridiagonal method for american options under
  jump-diffusion models.
\newblock {\em SIAM J. Sci. Comput.}, 33:1860--1872, 2011.

\bibitem{LJWZ21}
H.~Liao, B.~Ji, L.~Wang, and Z.~Zhang.
\newblock Mesh-robustness of the variable steps {BDF2} method for the
  cahn-hilliard model.
\newblock {\em \url{https://arxiv.org/abs/2102.03731}}, 2021.

\bibitem{2020crystal}
H.~Liao, B.~Ji, and L.~Zhang.
\newblock An adaptive {BDF2} implicit time-stepping method for the phase field
  crystal model.
\newblock {\em IMA J. Numer. Anal.,
  \url{https://doi.org/10.1093/imanum/draa075}}, 2020.

\bibitem{LLZ18}
H.~Liao, D.~Li, and J.~Zhang.
\newblock Sharp error estimate of a nonuniform {L1} formula for time-fractional
  reaction-subdiffusion equations.
\newblock {\em SIAM J. Numer. Anal.}, 56(2):1112--1133, 2018.

\bibitem{LMZ21}
H.~Liao, W.~McLean, and J.~Zhang.
\newblock A second-order scheme with nonuniform time steps for a linear
  reaction-subdiffusion equation.
\newblock {\em Commun. Comput. Phys.}, 30(2):567--601, 2021.

\bibitem{LSTZ21}
H.~Liao, X.~Song, T.~Tang, and T.~Zhou.
\newblock Analysis of the second-order {BDF} scheme with variable steps for the
  molecular beam epitaxial model without slope selection.
\newblock {\em Sci. China Math.}, 64:887--902, 2021.

\bibitem{LTZ20}
H.~Liao, T.~Tang, and T.~Zhou.
\newblock On energy stable, maximum-principle preserving,second-order {BDF}
  scheme with variable steps for the allen-cahn equation.
\newblock {\em SIAM J. Numer. Anal.}, 58(4):2294--2314, 2020.

\bibitem{2019diffusion}
H.~Liao and Z.~Zhang.
\newblock Analysis of adaptive {BDF2} scheme for diffusion equations.
\newblock {\em Math. Comp.}, 90(329):1207--1226, 2020.

\bibitem{LMZ20}
W.~Liao, H.and~McLean and J.~Zhang.
\newblock A discrete gr\"onwall inequality with application to numerical
  schemes for fractional reaction-subdiffusion problems.
\newblock {\em SIAM J. Numer. Anal.}, 57(1):218--237, 2019.

\bibitem{MPS04}
A.-M. Matache, T.~von Petersdorff, and C.~Schwab.
\newblock Fast deterministic pricing of options on {L}\'{e}vy driven assets.
\newblock {\em M2AN Math. Model. Numer. Anal.}, 38(1):37--71, 2004.

\bibitem{M76}
R.~C. Merton.
\newblock Option pricing when underlying stock returns are discontinuous.
\newblock {\em J. Finance Econ.}, 3(1):125--144, 1976.

\bibitem{N19}
H.~Nishikawa.
\newblock On large start-up error of {BDF2}.
\newblock {\em J. Comput. Phys.}, 392:456--461, 2019.

\bibitem{ST14}
S.~Salmi and J.~Toivanen.
\newblock {IMEX} schemes for pricing options under jump-diffusion models.
\newblock {\em Appl. Numer.Math.}, 84:33--45, 2014.

\bibitem{STS14}
S.~Salmi, J.~Toivanen, and L.~Von~Sydow.
\newblock An {IMEX}-scheme for pricing options under stochastic volatility
  models with jumps.
\newblock {\em SIAM J. Sci. Comput.}, 36(5):B817--B834, 2014.

\bibitem{SFF18}
G.~S\"{o}derlind, I.~Fekete, and I.~Farag\'{o}.
\newblock On the zero-stability of multistep methods on smooth nonuniform
  grids.
\newblock {\em BIT Numer Math}, 58:1125--1143, 2018.

\bibitem{SOG17}
M.~Stynes, E.~O$'$Riordan, and J.~L. Gracia.
\newblock Error analysis of a finite difference method on graded meshes for a
  time-fractional diffusion equation.
\newblock {\em SIAM J. Numer. Anal.}, 55(2):1057--1079, 2017.

\bibitem{TR00}
D.~Tavella and C.~Randall.
\newblock Pricing financial instruments: The finite difference method.
\newblock {\em Wiley, Chichester}, 2000.

\bibitem{Galerkin}
V.~Thom\'{e}e.
\newblock {\em Galerkin finite element methods for parabolic problems, second
  edition}, volume~1.
\newblock Springer-Verlag, 2006.

\bibitem{2019On}
W.~Wang, Y.~Chen, and H.~Fang.
\newblock On the variable two-step {IMEX BDF} method for parabolic
  integro-differential equations with nonsmooth initial data arising in
  finance.
\newblock {\em SIAM J. Numer. Anal.}, 57:1289--1317, 2019.

\bibitem{W98}
P.~Wilmott.
\newblock Derivatives : the theory and practice of financial engineering.
\newblock {\em Wiley}, 1998.

\bibitem{2020reaction}
J.~Zhang and C.~Zhao.
\newblock Sharp error estimate of {BDF2} scheme with variable time steps for
  linear reaction-diffusion equations.
\newblock {\em J. Math., \url{https://doi.org/10.13548/j.sxzz.20211014.001}},
  41(6):471--488, 2021.

\bibitem{2021beam}
J.~Zhang and C.~Zhao.
\newblock Sharp error estimate of {BDF2} scheme with variable time steps for
  molecular beam expitaxial models without slop selection.
\newblock {\em \url{https://doi.org/10.13140/RG.2.2.24714.59842}}, 2021.

\end{thebibliography}

\end{document}